\documentclass[12pt]{amsart}
\usepackage{amssymb}
\usepackage{amscd}
\usepackage{amsmath}
\usepackage{amsthm}

\hoffset -1.5cm

\textwidth 16.3truecm

\textheight 20truecm

\let\geq\geqslant
\let\leq\leqslant


\DeclareMathOperator{\vol}{vol}

\DeclareMathOperator{\supp}{supp}

\DeclareMathOperator{\Ran}{Ran}
\DeclareMathOperator{\Dom}{Dom}

\DeclareMathOperator{\Tr}{Tr}

\DeclareMathOperator{\Span}{span}

\DeclareMathOperator{\Clos}{Clos}
\DeclareMathOperator{\Spec}{Spec}

\renewcommand\Im{\hbox{{\rm Im}}\,}

\newcommand{\abs}[1]{\lvert#1\rvert}
\newcommand{\aabs}[1]{\left\lvert#1\right\rvert}
\newcommand{\norm}[1]{\lVert#1\rVert}


\newcommand{\R}{{\mathbb R}}

\newcommand{\C}{{\mathbb C}}


\numberwithin{equation}{section}
\renewcommand{\theequation}{\thesection.\arabic{equation}}


\theoremstyle{plain}
\newtheorem{theorem}{\bf Theorem}[section]
\newtheorem{lemma}[theorem]{\bf Lemma}
\newtheorem{proposition}[theorem]{\bf Proposition}

\theoremstyle{definition}

\theoremstyle{remark}
\newtheorem*{remark*}{\bf Remark}
\newtheorem{remark}[theorem]{\bf Remark}
\newtheorem{example}[theorem]{\bf Example}
\newtheorem{assumption}[theorem]{\bf Assumption}


\DeclareMathOperator{\Symp}{Symp}
\DeclareMathOperator{\Ham}{Ham}
\DeclareMathOperator{\CAL}{CAL}
\newcommand{\wt}{\widetilde}
\newcommand{\N}{{\mathcal N}}
\newcommand{\M}{{\mathcal M}}

\begin{document}
\title[Scattering matrix in Hamiltonian mechanics]{The scattering matrix and associated formulas in Hamiltonian mechanics}
\sloppy

\author{Vladimir Buslaev}
\address{Dept. of Mathematical Physics\\
Institute for Physics, Saint Petersburg State University\\
1 Ulyanovskaya str, St. Petersburg-Petrodvorets\\
198504, Russia}
\curraddr{}
\email{buslaev@mph.phys.spbu.ru}
\thanks{}

\author{Alexander Pushnitski}
\address{Department of Mathematics\\
King's College London\\ 
Strand, London WC2R  2LS, U.K.}
\curraddr{}
\email{alexander.pushnitski@kcl.ac.uk}
\thanks{}

\subjclass[2000]{Primary 81U20; Secondary 37J99, 70H15}

\keywords{Scattering matrix, Hamiltonian mechanics, symplectic diffeomorphism, Calabi invariant, time delay}

\begin{abstract}
We survey the basic notions of scattering theory in Hamiltonian mechanics
with a particular attention to the analogies with scattering theory in quantum mechanics. 
We discuss the scattering symplectomorphism, which is analogous to the scattering matrix.
We prove identities which relate the Calabi invariant of the scattering symplectomorphism
to the total time delay and the regularised phase space volume. 
These identities are analogous to the Birman-Krein formula and the Eisenbud-Wigner 
formula in quantum scattering theory. 
\end{abstract}

\maketitle

\section{Introduction}\label{sec.1}

The scattering theory in quantum mechanics deals with the following abstract framework 
(see e.g. \cite{RS3,Yafaev}): for self-adjoint operators $H_0$ and $H$ in a Hilbert space, 
the large $t$ asymptotics of the corresponding unitary groups $e^{-itH_0}$ and $e^{-itH}$ are compared. 
In the scattering theory in Hamiltonian mechanics (in its most general form), one considers two Hamiltonian
functions $H_0$ and $H$ on a non-compact symplectic manifold and compares the large time 
asymptotics of the corresponding two Hamiltonian flows. 
We refer to these two branches of scattering theory as ``quantum" and ``classical" cases for short.

One of the fundamental objects in the ``quantum" scattering theory is the \emph{scattering matrix}. 
The purpose of this paper is to discuss an object in Hamiltonian scattering theory which is in many ways
(perhaps not yet entirely understood) analogous to the scattering matrix. 
For the want of a better term, we call this object the \emph{scattering symplectomorphism}; 
it is a symplectic diffeomorphism on the manifold of 
the orbits of the Hamiltonian flow of  $H_0$ of constant energy.

In ``quantum" scattering theory, the determinant of the scattering matrix is related 
to the spectral shift function by the Birman-Krein 
formula and to the total time delay by the Eisenbud-Wigner formula. 
We will discuss the ``classical" analogues
of time delay and the spectral shift function and of the Birman-Krein and Eisenbud-Wigner formulas. 
It appears that the ``classical'' analogue of the determinant of the scattering matrix
is given by the Calabi invariant of the scattering symplectomorphism.

Some of the constructions presented here are new (to the best of our knowledge) while others appeared
in mathematics or physics literature at different levels of rigour and generality;
this will be discussed below in more detail. 
We hope that collecting all this material and presenting it in a uniform way in a fairly general 
setting will be useful. 

In Section~\ref{sec.2} we describe the set-up of Hamiltonian scattering and introduce the main
objects: the ``classical" analogues of the wave operators and the scattering map, the scattering symplectomorphism, 
the total time delay and the regularised phase space volume (the latter is the analogue of the
``quantum" spectral shift function). 
In Section~\ref{sec.d}, we state our main results which relate the scattering symplectomorphism 
to the regularised phase space volume and the time delay;  these are the ``classical" analogues of the 
Birman-Krein and the Eisenbud-Wigner formulas. 
At the end of Section~\ref{sec.d}, we also address the issue of whether 
the scattering symplectomorphism is a Hamiltonian symplectomorphism. 
In Section~\ref{sec.k}, we consider an example from classical mechanics. 
The proofs are presented in Sections~\ref{sec.g}--\ref{sec.i}. 
In Appendix~A we collect the relevant formulas and definitions from ``quantum'' scattering
theory for the purposes of comparison with the ``classical'' case.  
In Appendix~B, we recall the necessary background information from symplectic geometry. 
In particular, we recall the definition of the Calabi invariant of a symplectomorphism. 

In the authors' personal view of scattering theory, the analogy 
between the ``classical" and ``quantum" cases plays an important role. 
However, the reader not interested in the ``quantum" scattering theory, can safely 
ignore all references to it.

\section{The main objects of  scattering theory}\label{sec.2}

\subsection{Notation and assumptions}
Let $\N$ be a non-compact $2n$-dimensional symplectic manifold, $n\geq1$, with a symplectic
form $\omega$. 
We will compare the large time behaviour of the Hamiltonian flows associated with two Hamiltonian 
functions $H_0,H\in C^\infty(\N)$. 
We use the following notation: $X$ is the Hamiltonian vector field corresponding to $H$ and 
$\Phi_t:\N\to \N$  is the Hamiltonian flow: 
$$
(i(X)\omega)(\cdot)\equiv \omega(X,\cdot)=-dH(\cdot);
\quad
\frac{d}{dt}\Phi_t(\cdot)=X(\Phi_t(\cdot)),
\quad
\Phi_0=id.
$$
For $E\in\R$, let 
\begin{equation}
G(E)=\{x\in \N\mid H(x)\leq E\},
\quad 
A(E)=\{x\in \N\mid H(x)=E\}.
\label{a15}
\end{equation}
We will often suppress the dependence on $E$ in our notation, since the value of $E$ will be
fixed for a large part of the paper.
Notation $X_0$, $\Phi_t^0$, $G_0$, $A_0$ have the same meaning and refer to the Hamiltonian $H_0$. 

The symplectic volume $\vol(\Omega)$ of an open set $\Omega\subset \N$ 
is defined as
$$
\vol(\Omega)=\int_\Omega \frac{\omega^n}{n!}, \quad \omega^n=\omega\wedge\cdots\wedge\omega;
$$
note the normalisation $1/n!$. The characteristic function of $\Omega$ is denoted by $\chi_\Omega$. 

Fix $E\in\R$. We make the following assumptions: 
\begin{assumption}\label{ass1}

(i) $E$ is a regular value of $H$, $H_0$: 
$dH(x)\not=0$ for all $x\in A(E)$ and $dH_0(x)\not=0$ for all $x\in A_0(E)$. 
Thus, $A(E)$ and $A_0(E)$ are $C^\infty$-smooth manifolds. 

(ii) The maps $\Phi_t^0: A_0(E)\to A_0(E)$ and  $\Phi_t:A(E)\to A(E)$ are well defined for 
all $t\in\R$, i.e. the trajectories do not run off to infinity in finite time. 
Thus, $\Phi^0_t$ and $\Phi_t$ are groups of diffeomorphisms on $A_0(E)$ 
and $A(E)$. 

(iii) For any compact set $K\subset A_0(E)$  
there exists $T>0$ such that for all $x\in K$ and all $\abs{t}\geq T$, one has
$\Phi_t^0(x)\notin K$.

(iv) The sets $G(E)\cap\supp(H-H_0)$ and $G_0(E)\cap\supp(H-H_0)$ are compact. 
\end{assumption}

\subsection{Wave operators and the scattering map}
For $x\in A_0(E)$, let 
\begin{equation}
W_\pm(x)=\lim_{t\to\pm\infty}\Phi_{-t}\circ\Phi_t^0(x).
\label{wo}
\end{equation}
If $K_0\subset A_0(E)$ is a compact, then, 
taking $K=\supp(H-H_0)\cup K_0$ in Assumption~\ref{ass1}(iii), we see that  the above limits exist 
and are attained at finite values of $t$:
\begin{equation}
W_+(x)=\Phi_{-t}\circ\Phi^0_t(x), 
\quad
W_-(x)=\Phi_{t}\circ\Phi^0_{-t}(x), 
\quad 
\forall x\in K_0,
\quad
\forall \abs{t}\geq T.
\label{a7}
\end{equation}
Since $H_0$ (resp. $H$) is constant along the orbits of $\Phi^0$ (resp. $\Phi$), we get
$$
H(W_+(x))=H(\Phi_{-t}\circ\Phi^0_t(x))=H(\Phi^0_t(x))=H_0(\Phi^0_t(x))=H_0(x),
\quad 
\forall x\in K_0,
\quad
\forall \abs{t}\geq T
$$
and in the same way, $H(W_-(x))=H_0(x)$. It follows that $W_\pm(A_0(E))\subset A(E)$. 
However, it is easy to construct examples such that $W_\pm(A_0(E))\not=A(E)$. 
Thus, we make an additional assumption: 
\begin{equation}
W_+(A_0(E))=W_-(A_0(E))=A(E).
\label{completeness}
\end{equation}
Since $\Phi_t$ and $\Phi_t^0$ are symplectic 
diffeomorphisms of $\N$ for each $t$, it follows from \eqref{a7} that $W_\pm:A_0(E)\to A(E)$ 
are diffeomorphisms and that
$W_\pm^*(\omega|_{A})=\omega|_{A_0}$. 

Next, since the definition of $W_\pm$ can also be written as 
$W_\pm(x)=\lim_{s\to\pm\infty}\Phi_{-t-s}\circ\Phi_{t+s}^0(x)$,
we get the intertwining property
\begin{equation}\label{a2}
W_\pm\circ \Phi^0_t=\Phi_t\circ W_\pm, 
\quad \forall t\in\R.
\end{equation}
Assumption \eqref{completeness}, together with the intertwining property \eqref{a2}, 
ensures that the flow $\Phi_t$ on $A(E)$ has essentially the same properties as $\Phi_t^0$ on 
$A_0(E)$.
In particular, Assumption~\ref{ass1}(iii) holds true for the flow $\Phi_t$ on $A(E)$.

Assuming \eqref{completeness}, we can define the scattering map 
\begin{equation}
S_E=W_+^{-1}\circ W_-.
\label{sm}
\end{equation}
By \eqref{a7}, one can write
\begin{equation}
S_E(x)=\Phi_{-t}^0\circ \Phi_{2t}\circ \Phi_{-t}^0(x), 
\quad 
\forall x\in K_0,
\quad
\forall \abs{t}\geq T.
\label{a1}
\end{equation}
It follows that $S_E: A_0(E)\to A_0(E)$ is a diffeomorphism onto $A_0(E)$ and 
\begin{equation}
S_E^*(\omega |_{A_0})=\omega |_{A_0}. 
\label{a3}
\end{equation}
From \eqref{a2} (or directly from \eqref{a1}) it follows that 
\begin{equation}
S_E\circ \Phi^0_t=\Phi_t^0\circ S_E, 
\quad \forall t\in\R.
\label{a10}
\end{equation}
The scattering map is usually defined initially on the whole of $\N$ (or for some range of energies)
and then restricted onto $A_0(E)$. 

The above constructions are very well known; see e.g. \cite{Hunziker,RS3,Thirring2}.
The fact that the wave operators and the scattering map are symplectic transformations
is particularly  emphasized in the works by W.~Thirring, see \cite{Thirring2} or
\cite{NT,Thirring1}.

\subsection{Symplectic reduction and the scattering symplectomorphism}
One can consider the set of all orbits of the dynamics $\Phi^0$ on the constant energy surface $A_0(E)$ 
as a symplectic manifold $\wt A_0=\wt A_0(E)$. 
Indeed, by Assumption~\ref{ass1}(i)--(iii),  the action of the group $\Phi^0$ on $A_0$ is smooth, proper and free, 
and therefore
(see \cite[Proposition~4.1.23]{AbrM}) the orbit space admits a smooth manifold structure 
and the quotient map $\pi_0: A_0\to \wt A_0$ is a submersion. 
It is easy to construct charts on $\wt A_0$ by choosing sufficiently small $(2n-2)$-dimensional submanifolds
of $A_0$ such that $X_0$ is non-tangential to these manifolds; see the proof of Lemma~\ref{lma.g1}.
If $x\in A_0$ is a point of an orbit $y\in \wt A_0$, then the tangent space $T_y \wt A_0$ can be identified 
with the quotient space $T_x A_0/\Span\{ X_0(x)\}$.
There exists a unique symplectic form $\wt \omega_0$ on $\wt A_0$ such that 
$\pi_0^*\wt \omega_0=\omega|_{A_0}$; 
see e.g. \cite[Theorem~4.3.1 and Example~4.3.4(ii)]{AbrM}.
It is not difficult to prove that if $\N$ is exact (i.e. there exists a 1-form $\alpha$ on $\N$ such 
that $\omega=d\alpha$), then $\wt A_0$ is also exact, see Lemma~\ref{lma.g1} below.

If $f:A_0\to \R$ is a smooth function such that $f\circ \Phi_t^0=f$ for all $t\in\R$, then $f$ generates
a smooth function $\wt f: \wt A_0\to\R$ such that $\wt f\circ \pi_0=f$. 
In a similar way, by \eqref{a10}, the scattering map $S_E$ generates the map 
\begin{equation}
\wt S_E: \wt A_0\to \wt A_0, \quad
\pi_0\circ S_E=\wt S_E\circ \pi_0.
\label{a11}
\end{equation}
We will call $\wt S_E$ the \emph{scattering symplectomorphism}.

Since the action of $\Phi$ on $A$  is also free, smooth and proper, one can 
consider the symplectic manifold $\wt A$ of the orbits of $\Phi$ on $A$, with the natural 
projection $\pi: A\to\wt A$ and a symplectic form $\wt \omega$ on $\wt A$. By the intertwining property 
\eqref{a2}, there exist symplectic diffeomorphisms 
\begin{equation}
\wt W_\pm: \wt A_0\to\wt A, 
\quad 
\wt W_\pm\circ \pi_0=\pi\circ W_\pm. 
\label{a12}
\end{equation}

Since we are going to discuss integration of forms over $\N$, $A_0$, $A$, $\wt A_0$, $\wt A$, 
we should fix orientation on these manifolds. 
Orientation on $\N$ is fixed in such a way that the form $\omega^n$ is positive on a positively 
oriented basis. In the same way, orientation on $\wt A_0$ is fixed in such a way that the form
$\wt \omega_0^{n-1}$ is positive on a positively oriented basis. 
Orientation on $A_0$ is fixed such that if $(e_1,\dots,e_{2n-1})$ is a positively oriented
basis in $T_x A_0$ and $\xi\in T_x \N$ is such that $d_xH_0(\xi)>0$, then 
$(\xi,e_1,\dots,e_{2n-1})$ is a positively oriented basis in $T_x \N$. 
In other words, $A_0$ is considered as a boundary of $G_0$
with induced orientation.
Orientation on $\wt A$ and $A$ is fixed in a similar way to $\wt A_0$, $A_0$.

In the case $n=1$ the above reduction produces a ``manifold'' of dimension zero, 
i.e. a discrete set of orbits. The scattering symplectomorphism becomes
just a permutation map on the set of these orbits. In this case, integration over 
the ``volume forms''
$\wt \omega^0$, $\wt \omega_0^0$ will be understood simply as summation over this 
set of orbits.

\subsection{Poincar\'e section}\label{sec.a4}
The above procedure of symplectic reduction looks particularly simple if one makes 
\begin{assumption}\label{ass2}
There exists a smooth submanifold $\Gamma\subset A_0$ of dimension $2n-2$ such that:

(a) $X_0(x)\notin T_x\Gamma$ for all $x\in\Gamma$; 

(b) for all $x\in A_0$, there exists a unique $z=z(x)\in\Gamma$ 
and a unique $t=t(x)$ such that $x=\Phi_t^0(z)$. 
\end{assumption}
In this case, the elements $x\in A_0$ can be considered as pairs $(z,t)\in\Gamma\times \R$ 
such that $x=\Phi_t(z)$.
It is easy to see that 
$$
\text{Assumption~\ref{ass1}(i),(ii)}+\text{Assumption~\ref{ass2}}\Rightarrow \text{Assumption~\ref{ass1}(iii)}.
$$
 Let $i: \Gamma\to A_0$ be the natural embedding. 
Then $\gamma_0=\pi_0\circ i: \Gamma\to\wt A_0$ is a diffeomorphism and 
$\gamma_0^*\wt \omega_0=\omega|_\Gamma$. Thus, $\Gamma$ can be considered
as a ``concrete" realisation of the ``abstract" manifold $\wt A_0$. 

Using the above identification of $A_0$ and $\Gamma\times\R$, the free dynamics can be 
represented as
$\Phi_s^0:(z,t)\mapsto (z,t+s)$ and the scattering map as 
\begin{equation}
S_E:(z,t)\mapsto (\wt s_E(z), t-\tau_E(z)), 
\label{a13}
\end{equation}
where $\wt s_E=\gamma_0^{-1}\circ \wt S_E\circ \gamma_0: \Gamma\to\Gamma$ 
is a symplectic diffeomorphism, and $\tau_E: \Gamma\to\R$ is a smooth function. 
The map $\tau_E$ is often called \emph{time delay}, or \emph{sojourn time}. 
We note that the definition \eqref{a13} of $\tau_E$ depends on the choice of $\Gamma$; 
there is no invariant way of defining a time delay function on $\wt A_0$. 

The manifold $\Gamma$ is, of course, the well known Poincar\'e section; 
see e.g. \cite[\S 7.1]{AbrM}. The map $\wt s_E: \Gamma\to\Gamma$ in concrete 
cases appeared before in physics literature under the name Poincar\'e scattering map; 
see  \cite{Jung,Uzy,Uzy2}. Its connection with the ``quantum'' scattering matrix 
has also been discussed in physics literature, see e.g. \cite{Uzy2}.

\subsection{Total time delay}\label{sec.ttd}
Although the definition of time delay $\tau_E$ above depends on the choice of $\Gamma$, 
the \emph{total} (or average) \emph{time delay} $T_E$ can be defined (see \eqref{e6} below) in an invariant way.

Let $\Omega_1\subset\Omega_2\subset\dots\subset \N$ be a sequence of open 
pre-compact sets such that $\cup_{k=1}^\infty \Omega_k=\N$. 
Let us define the functions $u_k^0:A_0\to\R$ and $u_k:A\to\R$ by 
\begin{align}
u_k^0(x)&
=
\int_{-\infty}^\infty 
\chi_{\Omega_k}\circ \Phi_t^0(x) dt,
\quad x\in A_0,
\\
u_k(x)&
=\int_{-\infty}^\infty 
\chi_{\Omega_k}\circ\Phi_t(x)dt,
\quad x\in A.
\end{align}
It is straightforward to see that $u_k^0\circ \Phi_t^0=u_k^0$ and $u_k\circ \Phi_t=u_k$ 
for all $t\in\R$ and therefore $u_k^0$, $u_k$ generate functions $\wt u_k^0:\wt A_0\to\R$, 
$\wt u_k:\wt A\to\R$ such that
$\wt u_k^0\circ \pi_0=u_k^0$ and $\wt u_k\circ \pi=u_k$.

\begin{theorem}\label{th4}
Suppose Assumption~\ref{ass1} and completeness \eqref{completeness} hold true, and 
let $\Omega_1\subset\Omega_2\subset\dots\subset \N$ and $\wt u_k^0$, $\wt u_k$
be as defined above. 
Then the limit
\begin{equation}
T_E
=
\lim_{k\to\infty}
\left\{
\int_{\wt A}\wt u_k(y)  \frac{\wt \omega^{n-1}(y)}{(n-1)!}
-
\int_{\wt A_0}\wt u_k^0(y)  \frac{\wt \omega_0^{n-1}(y)}{(n-1)!}
\right\}
\label{e6}
\end{equation}
exists and is independent of the choice of the sequence $\{\Omega_k\}$.
If, in addition, Assumption~\ref{ass2} holds true and $\tau_E:\Gamma\to\R$ is 
as defined by \eqref{a13}, then 
\begin{equation}
T_E=\int_{\Gamma} \tau_E(x)\frac{\omega^{n-1}(x)}{(n-1)!}.
\label{e7}
\end{equation}
\end{theorem}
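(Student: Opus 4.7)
My plan for the existence part combines symplectic reduction along the free-flow orbits with the coarea formula at the regular value $E$. Fubini applied to the orbit fibration $\pi_0\colon A_0\to\wt A_0$ gives
\[
\int_{\wt A_0}\wt u_k^0\,\frac{\wt\omega_0^{n-1}}{(n-1)!}=\int_{A_0(E)\cap\Omega_k}\sigma_{A_0},
\]
where $\sigma_{A_0}$ is the Liouville surface measure on $A_0(E)$ (locally characterised by $\omega^n/n!=\sigma_{A_0}\wedge dH_0$), and the analogous identity with $0$-subscripts removed holds on the $H$-side. By Assumption~\ref{ass1}(iv) the set $\K:=\supp(H-H_0)\cap(G(E)\cup G_0(E))$ is compact, and outside $\K$ one has $H=H_0$, so $A(E)\setminus\K=A_0(E)\setminus\K$ and $\sigma_A=\sigma_{A_0}$ there. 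Hence for every $k$ with $\Omega_k\supset\K$ the difference $\int_{A(E)\cap\Omega_k}\sigma_A-\int_{A_0(E)\cap\Omega_k}\sigma_{A_0}$ is already independent of $k$, equal to the well-defined regularised integral that I take as $T_E$. This simultaneously establishes the existence of the limit in \eqref{e6} and its independence of the choice of $\{\Omega_k\}$.

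For the formula \eqref{e7} I would use the Poincar\'e section $\Gamma$ of Assumption~\ref{ass2} to make both sides of the difference concrete. The map $(z,t)\mapsto\Phi_t^0(z)$ is a diffeomorphism $\Gamma\times\R\to A_0(E)$ under which $\sigma_{A_0}$ pulls back to $(\omega^{n-1}/(n-1)!)\wedge dt$, yielding
\[
\int_{A_0(E)\cap\Omega_k}\sigma_{A_0}=\int_\Gamma u_k^0(z)\,\frac{\omega^{n-1}}{(n-1)!}.
\]
By the intertwining \eqref{a2}, $W_-(\Gamma)$ is a Poincar\'e section for $\Phi$, and since $W_-$ is symplectic, this transports to the analogous identity $\int_{A(E)\cap\Omega_k}\sigma_A=\int_\Gamma u_k(W_-(z))\,\omega^{n-1}/(n-1)!$. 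Proving \eqref{e7} therefore reduces to showing, for all sufficiently large $k$, that
\[
\int_\Gamma\bigl[u_k(W_-(z))-u_k^0(z)\bigr]\frac{\omega^{n-1}}{(n-1)!}=\int_\Gamma\tau_E(z)\,\frac{\omega^{n-1}}{(n-1)!}.
\]

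The last step is an orbit analysis. Writing $W_-(z)=W_+(S_E(z))=W_+(\Phi^0_{-\tau_E(z)}(\wt s_E(z)))$ and invoking \eqref{a7}, I obtain, for $z$ in any compact subset of $\Gamma$ and a uniform $T$, the matching
\[
\Phi_t(W_-(z))=\Phi_t^0(z)\ (t\leq -T),\qquad \Phi_t(W_-(z))=\Phi^0_{t-\tau_E(z)}(\wt s_E(z))\ (t\geq T).
\]
For $k$ so large that $\Omega_k$ contains the associated interaction region, both $\Phi_t(W_-(z))$ and $\Phi_t^0(z)$ lie inside $\Omega_k$ throughout $[-T,T]$; splitting the $t$-integral defining $u_k(W_-(z))$ and substituting $s=t-\tau_E(z)$ in the forward tail yields the pointwise formula
\[
u_k(W_-(z))-u_k^0(z)=\int_{T-\tau_E(z)}^\infty\chi_{\Omega_k}(\Phi_s^0(\wt s_E(z)))\,ds-\int_T^\infty\chi_{\Omega_k}(\Phi_t^0(z))\,dt.
\]
Integrating over $\Gamma$ and substituting $w=\wt s_E(z)$ in the first term (permissible because $\wt s_E^*\omega|_\Gamma=\omega|_\Gamma$), the two $\int_T^\infty$-contributions coincide and cancel; the residual segment $\int_{T-\tau_E(\wt s_E^{-1}(w))}^T\chi_{\Omega_k}(\Phi_s^0(w))\,ds$ equals $\tau_E(\wt s_E^{-1}(w))$, because $\Phi_s^0(w)$ remains in $\Omega_k$ on that short interval for $k$ large. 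A final change of variables $z=\wt s_E^{-1}(w)$ produces \eqref{e7}. The main obstacle will be to secure a single ``$k$ large'' uniformly over $z$; this is rescued by the compactness of $\supp\tau_E\cap\Gamma$, which follows from Assumption~\ref{ass1}(iv) because the set of $\Phi^0$-orbits at energy $E$ meeting $\supp(H-H_0)$ is compact, while for $z$ outside this compact one has $W_\pm(z)=z$, $\tau_E(z)=0$, and the integrands already coincide identically.
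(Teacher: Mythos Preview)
Your argument is correct. For the second part (formula \eqref{e7}) your orbit analysis is essentially the same as the paper's: both split the time integral into a past tail where $\Phi_t\circ W_-=\Phi_t^0$, a forward tail where $\Phi_t\circ W_-=\Phi_{t-\tau_E}^0\circ\wt s_E$, and a middle piece absorbed by $\Omega_k$, then use that $\wt s_E$ preserves $\omega|_\Gamma$ to cancel the forward tails and isolate $\tau_E$.

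For the first part (existence of the limit and independence of $\{\Omega_k\}$) you take a genuinely different and shorter route. The paper argues orbit-wise: it rewrites the bracket in \eqref{e6} via \eqref{e6a} as $\int_{\wt K_0}(\wt u_k\circ\wt W_--\wt u_k^0)$, then shows that for $k\ge\ell$ the difference between the $k$th and $\ell$th integrands equals $\wt v_k\circ\wt S_E-\wt v_k$ for an auxiliary function $\wt v_k$, and invokes the symplecticity of $\wt S_E$ to kill this upon integration; a separate sandwiching argument then handles independence of the sequence. You instead invoke the Fubini-type identity (the paper's Lemma~\ref{lma.g2} with $\mu=i(Y_0)\omega$, exactly as the paper itself uses it later in the proof of Theorem~\ref{th2}) to rewrite each orbit-space integral as a Liouville-measure integral over the corresponding energy surface, and then observe that $A(E)$ and $A_0(E)$, together with their Liouville densities, coincide outside the compact $\K$, so the difference literally freezes once $\Omega_k\supset\K$. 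This bypasses both the $v_k$-trick and the subsequence argument; the price is that you must appeal to the coarea/Fubini lemma, which the paper postpones to the proof of Theorem~\ref{th2}. In effect your proof of existence is the paper's proof of Theorem~\ref{th2} run in reverse, identifying $T_E$ from the outset with the regularised Liouville-measure difference $\int_{A(E)\cap\K}\sigma_A-\int_{A_0(E)\cap\K}\sigma_{A_0}$.
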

$T_E$ is the \emph{total time delay}. 
The above statement (in various concrete forms) is well known. 
See \cite{Nuss} for a survey of time delay
and \cite{Robert1} for a rigourous discussion of semiclassical aspects. 

\begin{remark}
Since $\wt W_-:\wt A_0\to \wt A$ (see \eqref{a12}) is a symplectic diffeomorphism, 
the difference of the integrals in \eqref{e6} can be rewritten as 
\begin{equation}
\int_{\wt A}\wt u_k(y)  \frac{\wt \omega^{n-1}(y)}{(n-1)!}
-
\int_{\wt A_0}\wt u_k^0(y)  \frac{\wt \omega_0^{n-1}(y)}{(n-1)!}
=
\int_{\wt A_0}
(\wt u_k\circ\wt W_-(y)-\wt u_k^0(y))  
\frac{\wt \omega_0^{n-1}(y)}{(n-1)!}.
\label{e6a}
\end{equation}
The quantity $\wt u_k\circ\wt W_-(y)-\wt u_k^0(y)$ and its limit
\begin{equation}
\lim_{k\to\infty} (\wt u_k\circ\wt W_-(y)-\wt u_k^0(y))
\label{td}
\end{equation}
is often interpreted as time delay 
related to the orbit $y$. Note, however, that  the limit \eqref{td}
may not exist unless the sequence $\Omega_k$ is chosen in a special way; 
see \cite{Martin1} for a discussion of this issue.
\end{remark}

\subsection{Regularised phase space volume}
Suppose that Assumption~\ref{ass1}(iv) holds true for some $E$. Let us denote
\begin{equation}
\xi(E)
=
\int_{\N}(\chi_{G_0(E)}-\chi_{G(E)})\frac{\omega^n}{n!}.
\label{xi}
\end{equation}
It is interesting to note that
if $\pm (H(x)-H_0(x))\geq0$ for all $x\in \N$, then $\pm \xi(E)\geq0$. 
The ``quantum" analogue of $\xi(E)$ is the spectral shift function. 
See the survey \cite{Robert2} for an extensive discussion of this analogy in 
semiclassical context.

\section{Main results}\label{sec.d}

We will use some notation and terminology from symplectic topology. 
We collect the required material in Appendix~B; for the details, see \cite{McDuff}.
In particular, we use the notion of the Calabi invariant. 
For a compactly supported symplectomorphism $\psi:\M\to \M$ 
of an exact non-compact symplectic manifold $\M$,
one defines the Calabi invariant  $\CAL(\psi)$ as  
the appropriately normalised integral of the generating function of $\psi$. 
The Calabi invariant is defined for those compactly supported symplectomorphisms $\psi$
that have a compactly supported generating function; we denote this set 
of symplectomorphisms by $\Dom(\CAL, \M)$. 
We note that our sign conventions and normalisation of  the Calabi 
invariant are different from those of \cite{McDuff}.

\subsection{$\CAL(\wt S_E)$ and the regularised phase space volume}\label{sec.3a}
\begin{theorem}\label{maintheorem}
Let $n\geq2$; 
suppose  that Assumption~\ref{ass1} and  completeness \eqref{completeness} hold true
for some $E\in\R$.
Assume also that $\N$ is exact and that $\wt A_0(E)$ is non-compact.
Then the map $\wt S_E:\wt A_0\to\wt A_0$ 
belongs to $\Dom(\CAL, \wt A_0)$ and the identity
\begin{equation}
\CAL(\wt S_E)=\xi(E)
\label{e1}
\end{equation}
holds true.
\end{theorem}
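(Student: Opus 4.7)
The plan is to construct an explicit generating function for $\wt S_E$ on $\wt A_0$ and then rewrite $\xi(E)$ via Stokes' theorem so that the resulting expression matches the Calabi integral. Fix throughout a primitive $\alpha$ of $\omega$ on $\N$; by the symplectic-reduction exactness statement alluded to after \eqref{a11} (\lma{g1}), this induces a primitive $\wt\alpha_0$ of $\wt\omega_0$ on $\wt A_0$.

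First I would check that $\wt S_E$ has compact support. If the $\Phi^0$-orbit of $x\in A_0$ does not intersect $\supp(H-H_0)\cap A_0(E)$, then along this orbit the two Hamiltonian flows coincide, so $S_E(x)=x$ and $\wt S_E$ fixes the class of $x$. Assumption~\ref{ass1}(iii),(iv) together imply that the collection of $\Phi^0$-orbits meeting $\supp(H-H_0)$ projects onto a compact subset of $\wt A_0$, which yields the required compact support.

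Next I would construct the generating function via the Cartan magic formula: for any Hamiltonian $H$ with vector field $X$ and flow $\Phi_s$, one has $\tfrac{d}{ds}\Phi_s^*\alpha = \Phi_s^*\,d(\alpha(X)-H)$, so pulling back $\alpha$ under a Hamiltonian flow adds an exact 1-form whose primitive is the time integral of the Lagrangian-type function $\alpha(X)-H$ along trajectories. Plugging this into the representation \eqref{a1}, $S_E=\Phi_{-t}^0\circ\Phi_{2t}\circ\Phi_{-t}^0$ (valid on any compactum for $|t|$ large), and using that $H=H_0=E$ on the respective energy surfaces so that the Hamiltonian contributions telescope, we obtain on $A_0(E)$ an identity $S_E^*\alpha|_{A_0}-\alpha|_{A_0}=dF$ where $F$ is compactly supported modulo $\Phi^0$. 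By averaging the construction symmetrically between the representations (\ref{a1}) and its mirror using $W_+$ instead of $W_-$, one can arrange $F\circ\Phi_t^0=F$, so $F$ descends to $\wt F$ on $\wt A_0$. This places $\wt S_E$ in $\Dom(\CAL,\wt A_0)$ and gives
\begin{equation*}
\CAL(\wt S_E)=\int_{\wt A_0}\wt F\,\frac{\wt\omega_0^{n-1}}{(n-1)!}.
\end{equation*}

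The final step identifies this integral with $\xi(E)$. Since $\omega^n=d(\alpha\wedge\omega^{n-1})$ and $\chi_{G_0}-\chi_G$ has compact support by Assumption~\ref{ass1}(iv), a cutoff-and-limit application of Stokes' theorem gives, in a regularised sense,
\begin{equation*}
n!\,\xi(E)=\int_{A_0}\alpha\wedge\omega^{n-1}-\int_{A}\alpha\wedge\omega^{n-1}=\int_{A_0}(\alpha-W_-^*\alpha)\wedge\omega^{n-1}|_{A_0},
\end{equation*}
where the second equality uses that $W_-\colon A_0\to A$ is an orientation-preserving diffeomorphism with $W_-^*(\omega|_A)=\omega|_{A_0}$. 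Applying the same Cartan formula to the composition \eqref{a7} defining $W_-$ represents $\alpha-W_-^*\alpha$ as $dF_-$ with $F_-$ an integral of $\alpha(X)-H$ minus $\alpha(X_0)-H_0$ along matched orbit segments. An integration by parts (using the closedness of $\omega^{n-1}|_{A_0}$) combined with the analogous identity for $W_+$ then reproduces exactly the expression $\int_{\wt A_0}\wt F\,\wt\omega_0^{n-1}/(n-1)!$ from the previous step, yielding $\CAL(\wt S_E)=\xi(E)$. The main technical obstacle I expect is in this last step: $F_-$ alone is \emph{not} compactly supported on $A_0$ (it takes distinct asymptotic values at the two ends of each $\Phi^0$-orbit), and $W_-$ alone is not symmetric in the dynamics, so the boundary contributions in Stokes' theorem must be carefully tracked. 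The cleanest route is likely to combine the $W_+$ and $W_-$ constructions symmetrically, which simultaneously enforces $\Phi^0$-invariance of the generating function in step two and ensures the cancellation of all non-compact boundary contributions in step three.
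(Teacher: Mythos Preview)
Your overall architecture---build a generating function for $\wt S_E$, rewrite $\xi(E)$ by Stokes, then match the two expressions---is exactly the paper's. The gap is that you are not using the full content of \lma{g1}: it does not say that an \emph{arbitrary} primitive of $\omega$ descends to $\wt A_0$; rather it constructs a specific $\alpha$ satisfying $i(X_0)\alpha=0$ on $A_0$, and this normalisation is precisely the device that dissolves every obstacle you flag.

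With that choice of $\alpha$: (a) $\alpha|_{A_0}$ is $\Phi^0$-invariant and descends to $\wt\alpha_0$; (b) in the Cartan computation for $S_E=\Phi^0_{-t}\circ\Phi_{2t}\circ\Phi^0_{-t}$ the two outer $\Phi^0$-legs contribute nothing, so the generating function is simply $\Delta_0(x)=-\int_{-\infty}^\infty\alpha(X)\circ\Phi_t(W_-(x))\,dt$, which is \emph{automatically} $\Phi^0$-invariant---no averaging is needed; (c) since $i(X_0)(\alpha\wedge\omega^{n-1})|_{A_0}=0$ and this is a form of top degree on $A_0$, one has $\alpha\wedge\omega^{n-1}|_{A_0}\equiv0$, whence Stokes gives directly $n!\,\xi(E)=-\int_A\alpha\wedge\omega^{n-1}$ with a compactly supported integrand and no regularisation; (d) the final link to the Calabi integral is then a clean Fubini along $\Phi$-orbits on $A$ (\lma{g2}), not an integration by parts on $A_0$.

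Without this normalisation your programme does not close. For the generating function to descend one needs $(S_E^*\alpha-\alpha)(X_0)=\alpha(X_0)\circ S_E-\alpha(X_0)$ to vanish on $A_0$, which fails for a generic primitive; no ``symmetric averaging between $W_+$ and $W_-$'' repairs this, since the obstruction lives in the choice of $\alpha$, not in the representation of $S_E$. Likewise, in your third step the change of variables via $W_-$ is delicate because $W_-$ is \emph{not} the identity outside a compact subset of $A_0$ (only outside a $\Phi^0$-saturated set), so the truncated region $A_0\cap\Omega$ does not match $W_-^{-1}(A\cap\Omega)$ and the boundary discrepancies need not vanish in the limit. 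Finally, note the normalisation: by \eqref{e2} with $m=n-1$ one has $\CAL(\wt S_E)=\tfrac1n\int_{\wt A_0}\wt\Delta_0\,\wt\omega_0^{n-1}/(n-1)!$, so your displayed formula for $\CAL$ is missing a factor of $1/n$.
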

This should be compared to the 
Birman-Krein formula \eqref{BK} in ``quantum" scattering theory, 
bearing in mind the analogy between the Calabi invariant 
and the logarithm of determinant, see Section~\ref{sec.app3}.

\subsection{The scattering matrix and the total time delay}\label{sec.3b}

\begin{theorem}\label{th2}
Suppose that Assumption~\ref{ass1} and completeness \eqref{completeness}
hold true for some $E\in\R$. 
Suppose also that Assumption~\ref{ass1}(iv) holds true for some $E_1>E$. 
Then the identity
\begin{equation}
T_E=-\frac{d\xi}{dE}(E)
\label{e4a}
\end{equation}
holds true.
\end{theorem}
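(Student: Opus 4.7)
The plan is to recognise the total time delay $T_E$ as $-d\xi/dE$ via a Hamiltonian coarea formula, applied along the exhaustion $\{\Omega_k\}$ used to define $T_E$.

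Fix a sequence $\Omega_1\subset\Omega_2\subset\cdots$ of precompact open sets exhausting $\N$ as in Theorem~\ref{th4}, and set
$$
F_k(E):=\int_{\wt A(E)}\wt u_k\,\frac{\wt\omega^{n-1}}{(n-1)!}-\int_{\wt A_0(E)}\wt u_k^0\,\frac{\wt\omega_0^{n-1}}{(n-1)!},
$$
so that $T_E=\lim_{k\to\infty}F_k(E)$ by \eqref{e6}. The central ingredient would be the identity
$$
\int_{\wt A(E)}\wt u_\Omega\,\frac{\wt\omega^{n-1}}{(n-1)!}=\frac{d}{dE}\int_{\Omega\cap G(E)}\frac{\omega^n}{n!},
$$
valid for any precompact open $\Omega\subset\N$ and any regular value $E$ of $H$, together with its analogue for $H_0$. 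To prove it I would work in a local Poincar\'e-section chart adapted to $H$: in coordinates $(z,t,E')$ where $z$ ranges over a $(2n-2)$-dimensional local transversal $\Gamma$ to the flow, $t$ runs along the flow and $E'$ is the energy, $\omega^n/n!$ factorises as $\omega_\Gamma^{n-1}/(n-1)!\wedge dE'\wedge dt$ with $\omega_\Gamma$ the pullback of $\omega$ to $\Gamma$. Fubini turns $\int_{\Omega\cap G(E)}\omega^n/n!$ into a triple integral; $d/dE$ removes the $E'$-integration, and the remaining integral in $t$ is precisely the dwell time $u_\Omega(z)$ on the local section. Since $\wt\omega$ pulls back to $\omega_\Gamma$ under the natural map $\Gamma\to\wt A(E)$, this equals the left-hand side. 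A partition of unity subordinate to such charts patches local pieces into the global identity, using that $\wt u_\Omega$ is compactly supported on $\wt A(E)$ (which follows from Assumption~\ref{ass1}(iii) and precompactness of $\Omega$).

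Applying the identity to both $(H,G)$ and $(H_0,G_0)$ and subtracting,
$$
F_k(E)=-\frac{d}{dE}\int_{\Omega_k}\bigl(\chi_{G_0(E)}-\chi_{G(E)}\bigr)\frac{\omega^n}{n!}.
$$
The hypothesis that Assumption~\ref{ass1}(iv) holds at some $E_1>E$ implies, by the monotonicity $G_\bullet(E')\subset G_\bullet(E_1)$, that it holds at every $E'\leq E_1$ and that the supports of $\chi_{G_0(E')}-\chi_{G(E')}$ for $E'$ in a fixed interval $[E-\delta,E+\delta]$ lie in one compact subset $K\subset\N$. Choosing $k$ so large that $\Omega_k\supset K$, the integrand coincides with $\xi(E')$ on the whole interval; hence $F_k(E)=-d\xi/dE(E)$ for all sufficiently large $k$, and \eqref{e6} yields $T_E=-d\xi/dE(E)$.

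The main obstacle is the coarea identity. Conceptually it is Fubini in a Darboux-like chart adapted to the flow, but keeping the factors of $(n-1)!$, the induced orientations on $A(E)$ and $\wt A(E)$, and the normalisation of $\wt\omega$ all consistent with the conventions of Section~\ref{sec.2} requires careful bookkeeping. Once this is in place, the exchange of $\lim_k$ and $d/dE$ is trivial thanks to the ``eventually constant'' phenomenon above, so no further analytic subtlety arises.
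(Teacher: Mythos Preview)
Your proposal is correct and follows essentially the same strategy as the paper: both show that for all sufficiently large $k$ the difference $F_k(E)$ equals $-d\xi/dE(E)$ via a coarea/Fubini identity relating $\int_{\wt A(E)}\wt u_\Omega\,\wt\omega^{n-1}/(n-1)!$ to $\tfrac{d}{dE}\vol(\Omega\cap G(E))$, and then use the ``eventually constant'' observation (Assumption~\ref{ass1}(iv) at $E_1>E$) to identify the limit. The only difference is packaging of the coarea step: the paper differentiates in $E$ using a transverse vector field $Y$ with $dH(Y)=1$ to get a boundary integral $\int_{A(E)\cap\Omega_k}(i(Y)\omega)\wedge\omega^{n-1}$ and then invokes the already-proved Fubini Lemma~\ref{lma.g2} (with $\mu=i(Y)\omega$, so that $i(X)\mu=1$), whereas you propose to do both steps at once in local $(z,t,E')$ coordinates --- which is exactly what underlies the proof of Lemma~\ref{lma.g2}, so the bookkeeping you flag as the ``main obstacle'' is the same work the paper has already isolated in that lemma.
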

This result in various concrete forms appeared before in physics literature; see e.g. \cite{NT, BO,LV}.
Combining Theorem~\ref{th2} with Theorem~\ref{maintheorem}, we get
$$
\frac{d}{dE}\CAL(\wt S_E)=-T_E.
$$
This should be compared to the 
Eisenbud-Wigner formula \eqref{EW} in ``quantum'' scattering.
A related result was obtained in \cite{Bolle} in the framework of Hilbert space classical scattering.

\subsection{$\wt S_E$ as a Hamiltonian symplectomorphism }
Recall the inclusions \eqref{e8}. It is easy to show (see Example~\ref{exa.6})
that $\wt S_E$ may fail to belong to $\Symp_0^c(\wt A_0)$, 
i.e. it may not be possible to continuously deform $\wt S_E$ into the identity map.
However, the following theorem shows that under some additional 
assumptions, $\wt S_E\in \Ham^c(\wt A_0)\subset \Symp_0^c(\wt A_0)$. 

\begin{theorem}\label{th3}
Assume that there exists a smooth family of Hamiltonians $H_s$, $s\in [0,1]$ such that 
$H_1=H$. Suppose that for all $s\in[0,1]$, Assumption~\ref{ass1} and \eqref{completeness} 
hold true for the pair of Hamiltonians $H_0$, $H_s$. Suppose also that the set
$A_0(E)\cap\bigl(\cup_{s\in[0,1]}\supp(H_s-H_0)\bigr)$ is pre-compact. 
Then the scattering symplectomorphism $\wt S_E$ (corresponding 
to the pair $H_0$, $H_1$) is a compactly supported Hamiltonian symplectomorphism: 
$\wt S_E \in \Ham^c(\wt A_0)$. 
\end{theorem}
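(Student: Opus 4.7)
The strategy is to exhibit $\wt S_E$ as the time-$1$ map of a Hamiltonian isotopy of $\wt A_0$. Applying the constructions of Section~\ref{sec.2} to each pair $(H_0,H_s)$ produces a family $W_\pm^{(s)}:A_0(E)\to A^{(s)}(E)$ of wave operators, scattering maps $S_E^{(s)}:A_0(E)\to A_0(E)$, and scattering symplectomorphisms $\wt S_E^{(s)}:\wt A_0\to\wt A_0$, with $\wt S_E^{(0)}=\mathrm{id}$ and $\wt S_E^{(1)}=\wt S_E$.

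First I would check that $s\mapsto\wt S_E^{(s)}$ is a compactly supported smooth symplectic isotopy. Let $K_*:=\overline{A_0(E)\cap\bigcup_{s\in[0,1]}\supp(H_s-H_0)}$; by hypothesis this is compact. Fix any compact $K_0\subset A_0(E)$ and apply Assumption~\ref{ass1}(iii) to the compact $K:=K_*\cup K_0$. This yields a single $T>0$ such that the finite-time realisation \eqref{a7} holds on $K_0$, with $W_\pm$ replaced by $W_\pm^{(s)}$ and $\Phi_{\mp t}$ by $\Phi_{\mp t}^{(s)}$, uniformly in $s\in[0,1]$. Since $(s,t,x)\mapsto\Phi_t^{(s)}(x)$ is jointly $C^\infty$, the maps $W_\pm^{(s)}$, $S_E^{(s)}$, and hence $\wt S_E^{(s)}$ are smooth in $s$. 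Any $\Phi^0$-orbit in $A_0(E)\setminus K_*$ lies in the region where $H_s=H_0$ for every $s$ and is therefore fixed by $S_E^{(s)}$, so $\wt S_E^{(s)}$ coincides with the identity outside the compact set $\pi_0(K_*)\subset\wt A_0$, uniformly in $s$. This already establishes $\wt S_E\in\Symp_0^c(\wt A_0)$.

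To upgrade this symplectic isotopy to a Hamiltonian one, set $K_s:=\partial_s H_s\in C^\infty(\N)$ and propose the generator
\begin{equation*}
h_s(y)=\int_{-\infty}^\infty K_s\bigl(\Phi_t^{(s)}(W_-^{(s)}(x))\bigr)\,dt,\qquad x\in\pi_0^{-1}(y).
\end{equation*}
Since $\supp K_s$ is contained in the pre-compact set $\bigcup_{s'}\supp(H_{s'}-H_0)$, and completeness \eqref{completeness} transports Assumption~\ref{ass1}(iii) from $\Phi^0$ on $A_0(E)$ to $\Phi^{(s)}$ on $A^{(s)}(E)$, each $\Phi^{(s)}$-orbit meets $\supp K_s$ in a bounded time interval; the integral therefore converges. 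Invariance under $t$-translation of the integration variable shows that $h_s$ depends only on the orbit $y$, so $h_s:\wt A_0\to\R$ is a compactly supported smooth function with $\supp h_s\subset\pi_0(K_*)$.

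The hard part is the generating identity
\begin{equation*}
i(Y_s)\wt\omega_0=-dh_s,\qquad Y_s=\partial_s\wt S_E^{(s)}\circ(\wt S_E^{(s)})^{-1}.
\end{equation*}
This is proved by differentiating through the integral defining $h_s$, using the standard first-order variation formula for the autonomous family $\Phi_t^{(s)}$ (which expresses $\partial_s\Phi_t^{(s)}\circ(\Phi_t^{(s)})^{-1}$ as the Hamiltonian vector field of the function $\int_0^t K_s\circ\Phi_{-\sigma}^{(s)}\,d\sigma$), computing $\partial_s W_\pm^{(s)}$ from the finite-time realisation, and pulling the resulting identity on $A_0(E)$ down to $\wt A_0$ via the relation $\pi_0^*\wt\omega_0=\omega|_{A_0}$. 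The main technical obstacle is the bookkeeping at the temporal boundaries: one must show that the $s$-variation of $W_-^{(s)}$ produces precisely $-dh_s$ on $\wt A_0$, while the contribution of $\partial_s W_+^{(s)}$ reduces to a multiple of the ignorable direction $X_0$ and hence vanishes under $\pi_{0*}$. Once this identity is established, $s\mapsto\wt S_E^{(s)}$ is by definition a compactly supported Hamiltonian isotopy, so $\wt S_E\in\Ham^c(\wt A_0)$.
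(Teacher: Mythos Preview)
Your plan is the same as the paper's: differentiate the family $s\mapsto\wt S_E^{(s)}$ and exhibit a compactly supported time-dependent Hamiltonian generating the resulting symplectic isotopy, that Hamiltonian being the integral of $\partial_s H_s$ along the perturbed flow, transported to $\wt A_0$ by a wave operator. In the paper the generator is $\wt a_s\circ\wt W_+^s$, where $a_s(x)=\int_\R(\partial_sH_s)\circ\Phi^s_t(x)\,dt$ on $A_s$, and the generating identity is obtained by differentiating the finite-time formula $S_E^s=\Phi^0_{-T}\circ\Phi^s_{2T}\circ\Phi^0_{-T}$ directly, using the Duhamel-type variation $\partial_s\Phi^s_T(x)=\int_0^Td\Phi^s_{T-t}\bigl(Y_s\circ\Phi^s_t(x)\bigr)\,dt$ together with the auxiliary field $V_s(x)=\int_\R d\Phi^s_{-t}\bigl(Y_s\circ\Phi^s_t(x)\bigr)\,dt$, which satisfies $i(V_s)\omega=-da_s$ on $A_s$; this yields $(\partial_s S_E^s)\circ(S_E^s)^{-1}=(dW_+^s)^{-1}(V_s\circ W_+^s)$ and hence the identity on $\wt A_0$.

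There is a slip in your generator: you pull $a_s$ back by $W_-^{(s)}$, but the identity $i(Y_s)\wt\omega_0=-dh_s$ with $Y_s=\partial_s\wt S_E^{(s)}\circ(\wt S_E^{(s)})^{-1}$ holds for $h_s=\wt a_s\circ\wt W_+^s$, not $\wt a_s\circ\wt W_-^s$. Since $\wt W_+^s\circ\wt S_E^s=\wt W_-^s$, your $h_s$ is the paper's generator precomposed with $\wt S_E^s$; it is the Hamiltonian of the pulled-back field $d(\wt S_E^{(s)})^{-1}\circ\partial_s\wt S_E^{(s)}$ rather than of $Y_s$. Either field being Hamiltonian implies the other is, so the conclusion is unaffected, but your displayed identity is false as written. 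More seriously, your proposed verification does not work: the claim that the contribution of $\partial_s W_+^{(s)}$ ``reduces to a multiple of $X_0$'' and vanishes under $\pi_{0*}$ is not correct---both $\partial_s W_\pm^{(s)}$ contribute nontrivially, and no such cancellation occurs. The paper avoids the product-rule splitting of $(W_+^s)^{-1}\circ W_-^s$ altogether and differentiates the single composite $\Phi^0_{-T}\circ\Phi^s_{2T}\circ\Phi^0_{-T}$, which is what makes the bookkeeping clean.
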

An explicit formula for a family of Hamiltonians which generate $\wt S_E$ is given in Section~\ref{sec.i}, 
see \eqref{i16}, \eqref{i17}.

\section{Example: classical mechanics}\label{sec.k}
\subsection{The general case}
Let $\N=\R^{2n}$, $n\geq2$, with the standard symplectic form $\omega=\sum_{i=1}^n dp_i\wedge dq_i$, 
$(q_1\dots q_n,p_1\dots p_n)=x\in\R^{2n}$. Of course, $\N$ is exact, 
$\omega=d(-\sum_i q_i dp_i)$. 
We denote by $\langle\cdot,\cdot\rangle$ the usual inner product in $\R^n$, and 
$\abs{q}^2=\langle q,q\rangle$. 
Let 
$$
H_0(q,p)=\frac12\abs{p}^2+v_0(q), 
$$
where $v_0\in C^\infty(\R^n)$ satisfies the following assumptions: 
\begin{gather}
\sup_{q\in\R^n}(v_0(q)+\frac12 \langle q,\nabla v_0(q)\rangle)<\infty,
\label{k1}
\\
\inf_{q\in\R^n} v_0(q)>-\infty.
\label{k2}
\end{gather}
The quantity $v_0+\frac12\langle q,\nabla v_0\rangle$ in \eqref{k1} is known as 
the virial. 
Next, let $H(q,p)=H_0(q,p)+v(q)$, where $v\in C_0^\infty(\R^n)$. Let $E\in\R$
be such that
\begin{gather}
\text{$E$ is not a critical value of $v_0$ or $v_0+v$;}
\label{k3}
\\
E>\sup_{q\in\R^n}(v_0(q)+\frac12\langle q,\nabla v_0(q)\rangle).
\label{k4}
\end{gather}
Fix any $R\in\R$ and consider 
\begin{equation}
\Gamma=\{(q,p)\mid h_0(q,p)=E, \langle q,p\rangle=R\}\subset A_0(E).
\label{k5}
\end{equation}
\begin{lemma}\label{lma.k1}
Assume \eqref{k1} through \eqref{k4}. Then Assumption~\ref{ass1} 
and Assumption~\ref{ass2} hold true with $\Gamma$ as in 
\eqref{k5}. 
\end{lemma}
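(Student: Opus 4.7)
The plan is to reduce the entire lemma to a single virial computation on $A_0(E)$. Of the various hypotheses to verify, Assumption~\ref{ass1}(i) is immediate from \eqref{k3}: at a point where $dH_0=0$ one has $p=0$ and $\nabla v_0(q)=0$, forcing $E=v_0(q)$ to be a critical value of $v_0$, contrary to hypothesis; the argument for $H=H_0+v$ is analogous. Assumption~\ref{ass1}(iv) is also immediate, since $\supp(H-H_0)=\supp v$ is compact by hypothesis, so $G(E)\cap\supp v$ and $G_0(E)\cap\supp v$ are closed subsets of a compact set. For Assumption~\ref{ass1}(ii), combine \eqref{k2} with energy conservation: on $A_0(E)$ one has $\abs{p}^2=2(E-v_0(q))\leq 2(E-\inf v_0)$, so $\abs{p(t)}$ is bounded uniformly in $t$, hence $\abs{q(t)}$ grows at most linearly and no orbit escapes in finite time; the same reasoning applies to $H$, since $v$ is additionally bounded on its compact support.

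The core of the proof is the virial identity along the free flow: for a trajectory on $A_0(E)$,
\begin{equation*}
\frac{d}{dt}\langle q(t),p(t)\rangle
=\abs{p(t)}^2-\langle q(t),\nabla v_0(q(t))\rangle
=2\bigl(E-v_0(q(t))-\tfrac12\langle q(t),\nabla v_0(q(t))\rangle\bigr),
\end{equation*}
which by \eqref{k4} is uniformly bounded below by some constant $2\delta>0$. This strict monotonicity forces everything else. First, the derivative of $\langle q,p\rangle$ along $X_0$ is nonzero at every point of $\Gamma$, which simultaneously yields Assumption~\ref{ass2}(a) and shows that $R$ is a regular value of the restriction of $\langle q,p\rangle$ to $A_0(E)$, so $\Gamma$ is a smooth $(2n-2)$-submanifold. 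Second, for Assumption~\ref{ass2}(b), on any complete orbit the function $\langle q(t),p(t)\rangle$ grows with slope at least $2\delta$ for all $t\in\R$, hence attains every real value exactly once; in particular each orbit meets $\Gamma$ at a unique time, defining $z(x)$ and $t(x)$ uniquely. Finally, Assumption~\ref{ass1}(iii) follows from the implication already recorded in Section~\ref{sec.a4}.

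The main potential obstacle is to make sure that the virial estimate, which is purely a statement about the free dynamics, really is sufficient to control all the objects appearing in Assumptions~\ref{ass1} and~\ref{ass2}. This is exactly what the chain above achieves: Assumption~\ref{ass2} concerns only $A_0(E)$ and $H_0$, while the role of $H$ in Assumption~\ref{ass1} enters only through the compactness of $\supp v$ in part (iv) and the bounded-momentum argument for global existence in part (ii). Once the virial identity is written down and combined with \eqref{k4}, no genuine technical difficulty remains and the verification is essentially bookkeeping.
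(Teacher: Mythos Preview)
Your overall strategy matches the paper's: the virial identity $\frac{d}{dt}\langle q,p\rangle\geq 2\delta>0$ is the engine for everything, and your handling of the smoothness of $\Gamma$ together with Assumption~\ref{ass2}(a) via the single observation $d\langle q,p\rangle(X_0)\neq 0$ on $A_0(E)$ is in fact a bit cleaner than the paper's (which checks separately that $dH_0$ and $d\langle q,p\rangle$ are linearly independent in $\R^{2n}$, and then that $X_0\notin T\Gamma$).

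There is, however, a genuine slip in your verification of Assumption~\ref{ass1}(iv). You write ``$\supp(H-H_0)=\supp v$ is compact'', but $H-H_0$ is a function on phase space $\R^{2n}$, not on $\R^n$; hence $\supp(H-H_0)=\supp v\times\R^n$, which is \emph{not} compact. So $G_0(E)\cap\supp(H-H_0)$ is not automatically a closed subset of a compact set, and your argument as written does not go through. The repair is exactly the momentum bound you already invoke for part~(ii): on this intersection one has $q\in\supp v$ (bounded) and $\tfrac12\abs{p}^2\leq E-\inf v_0<\infty$ by \eqref{k2}, so the set is bounded in both $q$ and $p$, hence compact; similarly for $G(E)$. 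The paper spells this out explicitly.
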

\begin{proof} 
Assumption~\ref{ass1}(i) follows from \eqref{k3}. 
Assumption~\ref{ass1}(ii) follows from \eqref{k2}, since
$\dot q=p$ and $\abs{p}^2=2(E-v_0)\leq C<\infty$.  

Let us check Assumption~\ref{ass1}(iv). We have
$$
G_0(E)\cap \supp(H-H_0)=\{(q,p)\mid q\in \supp v, \frac12 \abs{p}^2\leq E-v_0(q)\};
$$ 
using \eqref{k2} we see that this set is compact. In the same way, 
$G(E)\cap \supp(H-H_0)$ is compact. 

Let us check Assumption~\ref{ass2}. In order to check that $\Gamma$ is a smooth 
manifold in $A_0(E)$, it suffices to verify that $dH_0$ and $d\langle q,p\rangle$ are linearly independent
on $\Gamma$. Suppose that $dH_0=\lambda d\langle q,p\rangle$ at some point $(q,p)\in\Gamma$. 
Then $\nabla v_0(q)=\lambda p$ and $p=\lambda q$, and so 
$E=\frac12 \abs{p}^2+v_0(q)=\frac12\langle \nabla v_0(q),q\rangle+v_0(q)<E$
by \eqref{k4} --- contradiction. 

Next, we need to check that $X_0$ is non-tangential to $\Gamma$. We have
$X_0(q,p)=(p,-\nabla v_0(q))$. The tangent space $T_{(q,p)}\Gamma$ consists
of vectors $(\xi,\eta)$ such that $\langle \xi,p\rangle+\langle \eta,q\rangle =0$
and $\langle \xi,\nabla v_0(q)\rangle +\langle \eta,p\rangle=0$. 
For $(\xi,\eta)=X_0(q,p)$ we have
$$
\langle \xi,p\rangle+\langle \eta,q\rangle=\abs{p}^2-\langle q,\nabla v_0(q)\rangle=2(E-v_0(q))-\langle q,\nabla v_0(q)\rangle >0
$$
by \eqref{k4}, and so $X_0(q,p)$ is not in  $T_{(q,p)}\Gamma$. 

Finally, for any trajectory $(q(t),p(t))=\Phi^0_t(x)$, we have
\begin{equation}
\frac{d}{dt}\langle q(t), p(t)\rangle =\abs{p(t)}^2-\langle q(t),\nabla v_0(q(t))\rangle\geq C>0
\label{k6}
\end{equation}
by \eqref{k4}, and so there exists a unique $t\in\R$ such that $\langle q(t), p(t)\rangle =R$. 

Assumption~\ref{ass2}, together with Assumption~\ref{ass1}(i),(ii), implies Assumption~\ref{ass1}(iii). 
\end{proof}

Thus, \eqref{k1}--\eqref{k4} ensure that the wave operators $W_\pm:A_0(E)\to A(E)$ 
exist. In order to ensure that completeness \eqref{completeness} holds true, 
we have to make more specific assumptions. 
Let us  assume that 
\begin{equation}
E>\sup_{q\in\R^n}(v_0(q)+v(q)+\tfrac12\langle q,\nabla v_0(q)\rangle+\tfrac12\langle q,\nabla v(q)\rangle).
\label{k7}
\end{equation}
Then by the same argument as in the proof of Lemma~\ref{lma.k1} (see \eqref{k6}), we obtain that 
all trajectories of $\Phi_t$ leave $\supp v$ for large $\abs{t}$ and therefore coincide with some 
trajectories of the free dynamics for large $\pm t>0$. From here we get completeness \eqref{completeness}. 

It is also clear that the family $H_s(q,p)=H_0(q,p)+sv(q)$, $s\in[0,1]$,  satisfies assumptions of Theorem~\ref{th3}
for $E$ as in \eqref{k7}.
Thus, in this case $\wt S_E$ is a compactly supported Hamiltonian symplectomorphism. 

To summarise:  Suppose that  \eqref{k1}--\eqref{k4}, \eqref{k7} hold true. Then Assumptions~\ref{ass1},
\eqref{completeness} and Assumption~\ref{ass2} 
hold true and therefore Theorems~\ref{maintheorem}, \ref{th2}, \ref{th3} hold true.

\subsection{The case $v_0\equiv0$}
In the case $v_0\equiv0$
it is easy to give explicit formulas for the phase space volume, time delay, and the Calabi invariant of $\wt S_E$. 

The formula for the regularised phase space volume \eqref{xi}  is very well known (see e.g. \cite{Robert2} 
for a comprehensive discussion) and easy to compute:  
\begin{equation}
\xi(E)
=
\varkappa_n(2E)^{n/2}\int_{\R^n} \bigl(1-(1-v(q)E^{-1})_+^{n/2}\bigr)d^nq, \quad E>0,
\label{k10}
\end{equation}
where $\varkappa_n=\frac{\pi^{n/2}}{\Gamma(1+n/2)}$ is the volume of a unit ball in $\R^n$,
 $d^n q$ is the Lebesgue measure in $\R^n$, and $(a)_+=(a+\abs{a})/2$.

Next, the formula for the time delay is also well known; see e.g. \cite{Narnhofer}. 
Let $x^-=(q^-,p^-)\in\Gamma$ and let $x^+=(q^+,p^+)=S_E(x^-)$. 
Then the time delay function $\tau_E$ from \eqref{a13} can be expressed as 
$$
\tau_E(x^-)=\frac1{2E}(\langle q^-,p^-\rangle-\langle q^+,p^+\rangle),
$$
and the total time delay is 
$$
T_E=\int_\Gamma \tau_E(x^-)\frac{\omega^{n-1}(x^-)}{(n-1)!} 
=
(2E)^{(n-3)/2}\int_{{\mathbb S}^{n-1}}d^{n-1}\hat p^- \int_{\langle q^-,p^-\rangle=R} d^{n-1}q^- 
\  (\langle q^-,p^-\rangle-\langle q^+,p^+\rangle),
$$
where 
$\hat p^-=p^-/\abs{p^-}$, and $d^{n-1}\hat p^-$ is the Lebesgue measure on the unit sphere
in $\R^n$. 

Let us display formula \eqref{e4a} in the cases $n=2,3$:
\begin{align*}
T_E&=0 \qquad &(n=2),
\\
T_E&=6\pi \sqrt{2E}
\int_{\R^3} \bigl(1-(1-v(q)E^{-1})^{1/2}\bigr)d^3q \qquad &(n=3).
\end{align*} 

Finally, let us give a formula for $\CAL (\wt S_E)$. This requires some preliminaries.
Let us fix $\alpha=-\sum_{i=1}^n q_idp_i$. Note that the form $\alpha$ is invariant with respect to 
simultaneous rotations of coordinate axes in both configuration and momentum space:
$q\mapsto Oq$, $p\mapsto Op$, $O$ is an orthogonal matrix. 
Let us describe local symplectic coordinates on $\Gamma$. 
For $x^-=(q^-,p^-)\in\Gamma$, let the set $U_{p^-}\subset \Gamma$ be 
$$
U_{p^-}=\{(q,p)\in \Gamma \mid \abs{p-p^-}<\sqrt{2E}\}.
$$
Let us simultaneously rotate the coordinate axes in both configuration and momentum space
in such a way that after this rotation, we have  $p^-=(0,\dots,0,\sqrt{2E})$. 
Then $(q_1,\dots,q_{n-1},p_1,\dots,p_{n-1})$ are symplectic coordinates on $U_{p^-}$. 

Recall the representation \eqref{a13} for the scattering map $S_E$. 
Fix $x^{-}=(q^{-},p^{-})\in\Gamma$ and let 
$\wt x=(\wt q,\wt p)=\wt s_E(x^{-})\in\Gamma$.
Let $(q_1,\dots,q_{n-1},p_1,\dots,p_{n-1})$ be symplectic coordinates in $U_{p^{-}}$ and 
let $(q'_1,\dots,q'_{n-1},p'_1,\dots,p'_{n-1})$ be symplectic coordinates in $U_{\wt p}$. 
Then the derivatives $\frac{\partial p'_i}{\partial q_j}$, $i,j=1,\dots,n-1$ are well defined. 
Define
$$
\rho(x^{-})
=
-\sum_{i,j=1}^{n-1} q_i^{-} \wt q_j \frac{\partial p'_j}{\partial q_i}(q^{-},p^{-}).
$$
We shall consider 
$\rho$ as a real valued function of $x^{-}\in \Gamma$. 
It is not difficult to see directly that this definition is independent of the choice of the local 
symplectic coordinates near $x^{-}$ and $\wt x$.
This will also follow from the proof of 
\begin{lemma}
One has
\begin{align}
\CAL(\wt S_E)
&=
\frac1{n(n-1)} \int_\Gamma \rho(x)\frac{\omega^{n-1}(x)}{(n-1)!}
\label{k8}
\\
&=
\frac{(2E)^{(n-1)/2}}{n(n-1)}\int_{{\mathbb S}^{n-1}}d^{n-1}\hat p \int_{\langle q,p\rangle=R} d^{n-1}q \ \rho(q,p).
\label{k8a}
\end{align}
\end{lemma}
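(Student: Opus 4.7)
The plan is to realize $\wt S_E$ as a symplectomorphism $\wt s_E:\Gamma\to\Gamma$ via the Poincar\'e-section diffeomorphism $\gamma_0$ of Section~\ref{sec.a4} and compute its generating function against a convenient primitive. That primitive is the restriction to $\Gamma$ of the Liouville form $\alpha=-\sum_{i=1}^n q_i\,dp_i$: to see that $\alpha|_\Gamma=\gamma_0^*\wt\alpha_0$ for a primitive $\wt\alpha_0$ of $\wt\omega_0$ on $\wt A_0$, I only need to check $\Phi_t^0$-invariance of $\alpha|_{A_0}$. Since the free flow acts as $(q,p)\mapsto(q+tp,p)$, one has $(\Phi_t^0)^*\alpha=\alpha-t\sum_i p_i\,dp_i$, and $\sum_i p_i\,dp_i=\tfrac12 d\abs{p}^2$ vanishes on $A_0(E)=\{\abs{p}^2=2E\}$. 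The generating function $\wt F$ of $\wt S_E$ (in the convention of Appendix~B) then pulls back to $F=\gamma_0^*\wt F$ on $\Gamma$ with $dF=\alpha|_\Gamma-\wt s_E^*(\alpha|_\Gamma)$ and
$$\CAL(\wt S_E)=\frac{1}{n}\int_\Gamma F\,\frac{\omega^{n-1}}{(n-1)!}.$$

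Next I compute $dF$ in the rotated coordinates introduced before the lemma. At $x^-\in\Gamma$ in the frame where $p^-=(0,\dots,0,\sqrt{2E})$, the relation $\sum_i p_i\,dp_i=0$ on $A_0$ together with $p_i^-=0$ for $i<n$ forces $dp_n|_{x^-}$ to vanish on $T_{x^-}\Gamma$, whence the $n$-th term of $\alpha$ drops at $x^-$ and
$$\alpha|_{x^-}(\xi)=-\sum_{j=1}^{n-1}q_j^-\,\xi^{p_j}.$$
The same argument at $\wt x=\wt s_E(x^-)$ in its primed frame gives $\wt s_E^*\alpha|_{x^-}(\xi)=-\sum_{i=1}^{n-1}\wt q_i\,d(p'_i\circ\wt s_E)|_{x^-}(\xi)$, and the chain rule then yields the $q$-components of $dF=\alpha-\wt s_E^*\alpha$:
$$\frac{\partial F}{\partial q_j}(x^-)=\sum_{i=1}^{n-1}\wt q_i\,\frac{\partial p'_i}{\partial q_j}(x^-).$$
Relabeling indices, I obtain the key pointwise identity
$$\rho(x^-)=-\sum_{j=1}^{n-1}q_j^-\,\frac{\partial F}{\partial q_j}(x^-),$$
which exhibits $\rho$ (up to sign) as the radial $q$-derivative of the intrinsically defined function $F$ and thereby establishes the coordinate-independence of $\rho$ claimed after its definition.

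To finish, I parametrise $\Gamma$ as $\{(q,\sqrt{2E}\hat p)\mid\hat p\in{\mathbb S}^{n-1},\ \langle q,\hat p\rangle=R/\sqrt{2E}\}$. A direct computation of the Liouville top form on $\Gamma$ yields
$$\frac{\omega^{n-1}}{(n-1)!}=(2E)^{(n-1)/2}\,d^{n-1}\hat p\wedge d^{n-1}q,$$
where, in the rotated frame adapted to each fixed $\hat p$, the variables $(q_1,\dots,q_{n-1})$ parametrise the hyperplane $\{\langle q,\hat p\rangle=R/\sqrt{2E}\}\cong\R^{n-1}$. Since $F$ has compact support, integration by parts on each such hyperplane gives
$$\int_{\R^{n-1}}\sum_{j=1}^{n-1}q_j\,\frac{\partial F}{\partial q_j}\,d^{n-1}q=-(n-1)\int_{\R^{n-1}}F\,d^{n-1}q,$$
and combining this with the pointwise identity for $\rho$ and integrating over $\hat p$ yields
$$\int_\Gamma\rho\,\frac{\omega^{n-1}}{(n-1)!}=(n-1)\int_\Gamma F\,\frac{\omega^{n-1}}{(n-1)!}.$$
Together with the Calabi formula this establishes \eqref{k8}, and \eqref{k8a} is just its explicit iterated form. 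The main obstacle is really the bookkeeping: justifying the descent of $\alpha$ to $\wt A_0$ and aligning the sign and normalisation conventions of Appendix~B with \eqref{k8}. Once that is done, the computation of $F$ in adapted coordinates and the single integration by parts are routine, and coordinate-independence of $\rho$ follows for free.
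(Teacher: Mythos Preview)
Your proof is correct but follows a genuinely different route from the paper's. The paper applies the wedge-product formula \eqref{e5} for the Calabi invariant, writing
\[
\CAL(\wt s_E)=\frac{1}{n!}\int_\Gamma (\wt s_E^*\alpha)\wedge\alpha\wedge\omega^{n-2},
\]
and then computes this $(2n-2)$-form directly in the local symplectic coordinates: a short exterior-algebra calculation gives the \emph{pointwise} identity $(\wt s_E^*\alpha)\wedge\alpha\wedge\omega^{n-2}=\tfrac{1}{n-1}\rho\,\omega^{n-1}$, from which \eqref{k8} is immediate. You instead start from the generating-function formula \eqref{e2}, identify $\rho=-\sum_j q_j^-\,\partial_{q_j}F$ with $F$ the compactly supported generating function of $\wt s_E$ for the primitive $\alpha|_\Gamma$, and recover the factor $n-1$ by an integration by parts over the transverse $q$-hyperplane at each fixed $\hat p$. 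The paper's route is a bit more economical (no integration by parts is needed; the identity holds before integration), whereas your approach has the nice byproduct of exhibiting $\rho$ as $-dF$ evaluated on the intrinsic vector $q_\perp^-=q^- - \langle q^-,\hat p^-\rangle\hat p^-$, which makes the coordinate-independence of $\rho$ transparent rather than something to be checked separately.
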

\begin{proof} 
According to formula \eqref{e5} for  $\CAL$, we have 
$$
\CAL(\wt S_E)
=
\CAL(\wt s_E)
=
\frac1{n!} \int_{\Gamma} (\tilde s_E^* \alpha)\wedge \alpha\wedge \omega^{n-2},
$$
with $\alpha=-\sum_i q_idp_i$.
As above, let $\wt x=\wt s_E(x^{-})$, let $(q_1,\dots,q_{n-1},p_1,\dots,p_{n-1})$ be symplectic coordinates in $U_{p^{-}}$ and 
let $(q'_1,\dots,q'_{n-1},p'_1,\dots,p'_{n-1})$ be symplectic coordinates in $U_{\wt p}$. 
In these coordinate systems, let us write the form $\alpha$ on $\Gamma$ as 
$$
\alpha=-\sum_{i=1}^{n-1}\alpha_i dp_i, 
\quad
\tilde s_E^* \alpha=-\sum_{i=1}^{n-1}\alpha'_i dp'_i.
$$
Note that $\alpha_i(x^{-})=q_i^{-}$ and $\alpha'_i(\wt x)=\wt q_i$, $i=1,\dots, n-1$. 
Then it is easy to compute
\begin{multline*}
(\tilde s_E^* \alpha)\wedge \alpha\wedge \omega^{n-2}
=
(\sum_{i=1}^{n-1}\alpha'_i dp'_i)\wedge
(\sum_{j=1}^{n-1}\alpha_j dp_j)\wedge \omega^{n-2}
\\
=
(\sum_{k,i=1}^{n-1} \alpha_i'\frac{\partial p_i'}{\partial q_k} dq_k
+
\sum_{k,i=1}^{n-1}\alpha_i'\frac{\partial p_i'}{\partial p_k}dp_k)
\wedge (\sum_{j=1}^{n-1}\alpha_j dp_j)\wedge \omega^{n-2}
\\
=
(\sum_{k,i=1}^{n-1} \alpha_i'\frac{\partial p_i'}{\partial q_k} dq_k)
\wedge (\sum_{j=1}^{n-1}\alpha_j dp_j)\wedge \omega^{n-2}
=
\frac1{n-1}\sum_{i,j=1}^{n-1}\alpha'_i\alpha_j \frac{\partial p_i'}{\partial q_j} 
dq_j\wedge dp_j\wedge
\omega^{n-2}
\\
=
-\frac1{n-1}\sum_{i,j=1}^{n-1}\alpha'_i\alpha_j \frac{\partial p_i'}{\partial q_j} 
\omega^{n-1}
=
\frac1{n-1}\rho \omega^{n-1},
\end{multline*}
and \eqref{k8} follows. Formula \eqref{k8a} is just \eqref{k8} with $\omega^{n-1}$ expanded.
\end{proof}

\subsection{The case of rotationally symmetric $v$}
Let $v_0\equiv0$ and let  $v$ be rotationally symmetric, $v(x)=v_1(\abs{x})$.  Then formula \eqref{k8} can be recast in 
terms of the usual variables of scattering theory: the impact parameter $s$ and the scattering angle $\phi$
(see \cite[Section~18]{LL}). 
For $(q,p)\in\R^{2n}$, $p\not=0$, the impact parameter $s>0$ is defined by 
$$
s=\aabs{q-\frac{\langle q,p\rangle p}{\abs{p}^2}}.
$$
Due to the conservation of angular momentum, the impact parameter is the integral of motion 
for both dynamics $\Phi^0$, $\Phi$. 
If $(q^+,p^+)=S_E(q^-,p^-)$, then the scattering angle $\phi$ is defined such that 
\begin{equation}
\cos\phi=
\frac{\langle p^+,p^-\rangle}{\abs{p^+}^2}.
\label{scangle}
\end{equation}
Of course, this does not yet define $\phi$ uniquely. 
In order to fix $\phi$, let us note that due to the rotational symmetry, $\phi$ depends only on $E$ and $s$. 
Thus, for a fixed energy $E$ let us define $\phi$ as a continuous function $\phi:(0,\infty)\to\R$
such that \eqref{scangle} holds true, $\phi(s)\to0$ as $s\to\infty$, 
and $\pm\phi(s)\geq0$ for large $s$ if $\pm\langle p_+,q_-\rangle\geq0$
(here for simplicity we assume $R=0$ in \eqref{k4}).
Formula for $\phi$ is well known (see e.g. \cite[Section~18]{LL}):
\begin{equation}
\phi(s)=\pi-2\int_{r_{min}}^\infty \frac{sdr}{r^2} \biggl(1-\frac{v_1(r)}{E}-\frac{s^2}{r^2}\biggr)^{-1/2}, 
\quad
r_{min}(s)=\max\left\{r: 1-\frac{v_1(r)}{E}-\frac{s^2}{r^2}=0 \right\}.
\label{angle}
\end{equation}
It is easy to compute that 
$$
\rho=\rho(s)=-\sqrt{2E}s^2\frac{d\phi}{ds},
$$
and therefore 
\begin{equation}
\CAL(\wt S_E)
=
-\varkappa_n\varkappa_{n-1} (2E)^{n/2} \int_0^\infty s^n \frac{d\phi}{ds}ds
=
n\varkappa_n\varkappa_{n-1} (2E)^{n/2} \int_0^\infty s^{n-1}  \phi(s)ds.
\label{k9}
\end{equation}
Substituting \eqref{angle} into \eqref{k9}, and using \eqref{k10}, it is not difficult to check directly the validity of Theorem~\ref{maintheorem} 
in this case.

\begin{example}\label{exa.6}
Let us give an example where the scattering symplectomorphism is not 
homotopic to the identity map.
Let $n=2$, $v_0=0$ and $v$ be of the form
$v(q)=v_1(\abs{q})$, $v_1\geq0$, $v_1(r)=0$ for $r\geq1$, 
$v_1'(r)<0$ for all $r\in(0,1)$. Let $E\in(0,v_1(0))$. 
Under these assumptions, \eqref{k7} may or may not hold true. 
However, using the separation of variables, one can directly 
check that the completeness condition \eqref{completeness}
holds true.

In order to make our notation more succinct, let us identify $\R^2$ with $\C$ in the usual way. 
Then the trajectories $q=q(t)$, $p=p(t)$ of the free dynamics $\Phi^0$ can be
parameterised by $\theta\in[0,2\pi)$ and $\sigma\in\R$ so that
$$
q(t)=i\sigma e^{i\theta}+t\sqrt{2E}e^{i\theta},
\quad 
p(t)=p(0)=\sqrt{2E}e^{i\theta}.
$$
Of course, $\abs{\sigma}$ is the impact parameter.

It is easy to see that $\sigma\sqrt{2E}$, $\theta$ are symplectic coordinates on $\wt A_0(E)$, 
and so $\wt A_0(E)$ can be identified with a cylinder $T^*\mathbb S^1$  (as in Example~\ref{exa.4}).
Due to the conservation of angular momentum, the scattering symplectomorphism has the form
$$
\wt S_E: (\sigma,\theta)\mapsto(\sigma,\theta+\varphi(\sigma))
$$
where $\varphi(\sigma)=0$ for $\sigma\leq-1$ and $\varphi(\sigma)=2\pi$
for $\sigma\geq1$. Here $\varphi$ is the scattering angle with a different normalisation. 
Since the map $\wt S_E$ ``twists'' the cylinder $T^*\mathbb S^1$, it is easy to see 
that $\wt S_E$ is not homotopic to the identity map: $\wt S_E\notin \Symp^c_0(T^*\mathbb S^1)$.
\end{example}

\section{Auxiliary statements}\label{sec.g}
\subsection{Exactness of $\wt A_0$}
Recall that $\N$ is called exact if there exists a 1-form $\alpha$ on $\N$ such that
$d\alpha=\omega$. 
Here we prove that exactness of $\N$ implies exactness of $\wt A_0$. 
In the course of the proof, we construct an atlas on $\wt A_0$; this construction
will be used in the proof of Lemma~\ref{lma.g2}.
\begin{lemma}\label{lma.g1}
Let Assumption~\ref{ass1} (i)--(iii) hold true and 
suppose that $\N$ is exact. 
Then there exists a 1-form $\alpha$ on $\N$ such that
$d\alpha=\omega$ and 
\begin{equation}
i(X_0)\alpha=0 \quad \text{ on } A_0.
\label{e0}
\end{equation} 
Moreover, there exists a 1-form $\wt \alpha$ on $\wt A_0$ such that $d\wt \alpha=\wt \omega_0$
and $\pi_0^*\wt \alpha=\alpha|_{A_0}$.
\end{lemma}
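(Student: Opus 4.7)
The plan is to pick any primitive $\alpha_0 \in \Omega^1(\N)$ of $\omega$ (which exists by exactness of $\N$) and modify it by an exact form, $\alpha := \alpha_0 + df$ for some $f \in C^\infty(\N)$. Since $d\alpha = \omega$ is automatic, everything reduces to solving the scalar transport equation
\[
X_0 f = -i(X_0)\alpha_0 \quad \text{on } A_0,
\]
together with extending $f|_{A_0}$ smoothly to $\N$, which is routine since $A_0$ is a closed embedded submanifold (e.g.\ via a tubular neighbourhood and a cut-off).

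To solve the transport equation globally on $A_0$, I would exploit the fact that Assumption~\ref{ass1}(i)--(iii) makes the $\R$-action $(t, x) \mapsto \Phi_t^0(x)$ smooth, free and proper; hence $\pi_0: A_0 \to \wt A_0$ is a principal $\R$-bundle. Since the fibre $\R$ is contractible, such a bundle admits a global smooth section $\sigma : \wt A_0 \to A_0$, yielding a diffeomorphism $(y, t) \mapsto \Phi_t^0(\sigma(y))$ from $\wt A_0 \times \R$ onto $A_0$. In these coordinates the equation is solved explicitly by
\[
f(\Phi_t^0(\sigma(y))) = -\int_0^t (i(X_0)\alpha_0)(\Phi_s^0(\sigma(y)))\, ds.
\]
A more hands-on alternative, which simultaneously produces the atlas on $\wt A_0$ promised in the lemma, is to cover $A_0$ by flow-box neighbourhoods of small $(2n-2)$-dimensional local Poincar\'e sections $\Sigma_\beta$ transverse to $X_0$ (chosen small enough that each $\Sigma_\beta$ meets every orbit at most once, which is possible by properness), solve the equation on each such neighbourhood by integrating from $\Sigma_\beta$, and glue by a partition of unity \emph{pulled back from $\wt A_0$}, which is annihilated by $X_0$ and so preserves the equation.

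Once $\alpha$ has been produced, descent to $\wt A_0$ is automatic. From $i(X_0)\alpha|_{A_0} = 0$ together with Cartan's formula and $i(X_0)\omega|_{A_0} = -dH_0|_{A_0} = 0$ (since $dH_0$ annihilates $TA_0$), one obtains $L_{X_0}(\alpha|_{A_0}) = 0$ as well, so $\alpha|_{A_0}$ is basic for the submersion $\pi_0$ and descends to a unique $\wt\alpha \in \Omega^1(\wt A_0)$ with $\pi_0^*\wt\alpha = \alpha|_{A_0}$. The identity $d\wt\alpha = \wt\omega_0$ then follows from injectivity of $\pi_0^*$ on forms applied to
\[
\pi_0^*(d\wt\alpha) = d(\alpha|_{A_0}) = \omega|_{A_0} = \pi_0^*\wt\omega_0.
\]

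The main obstacle is the global smooth solvability of the transport equation on $A_0$: along a single orbit it is trivially solved by an antiderivative, but arranging smooth dependence across orbits is exactly what Assumption~\ref{ass1}(iii) is there for, via properness of the $\R$-action. The remaining ingredients — existence of $\alpha_0$, the extension of $f$ to $\N$, and the descent to $\wt A_0$ — are formal once that step is in place.
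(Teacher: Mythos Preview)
Your proposal is correct and matches the paper's proof closely: the paper takes exactly your ``hands-on alternative'' route, constructing local Poincar\'e sections $\Gamma_j$, solving the transport equation on each flow tube by integrating from $\Gamma_j$, and gluing with a partition of unity pulled back from $\wt A_0$ (so that $i(X_0)d\zeta_j=0$); the descent step via Cartan's formula and injectivity of $\pi_0^*$ is also identical. Your primary approach---invoking triviality of the principal $\R$-bundle $\pi_0:A_0\to\wt A_0$ to get a global section $\sigma$ and solve the transport equation in one stroke---is a legitimate shortcut that the paper does not take, but it yields the same $\alpha$ up to the choice of section.
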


\begin{proof}

1. First let us construct an atlas on $\wt A_0$. 
Fix $y_0\in\wt A_0$ and choose $x_0\in y_0$. 
Using local coordinates around $x_0$, it is easy 
to construct a manifold $\Gamma_0\subset A_0$ 
of dimension $2n-2$ such that $x_0\in\Gamma_0$ and $X_0(x)\notin T_x\Gamma_0$ 
for all $x\in\Gamma_0$. By Assumption~\ref{ass1}(iii), 
there exists $T>0$ such that for all $\abs{t}\geq T$, one has
$\Phi_t^0(\overline\Gamma_0)\cap\overline\Gamma_0=\emptyset$; 
here $\overline\Gamma_0$ is the closure of $\Gamma_0$ in $A_0$. 
It follows that 
the orbit $\Phi^0_t(x_0)$ can intersect $\Gamma_0$ only finitely many times. 
By reducing $\Gamma_0$ if necessary, we can ensure that $\Phi^0_t(x_0)\notin\overline\Gamma_0$
for all $t\not=0$.

Next, since we may assume $\Gamma_0$ to be pre-compact, there exists $\epsilon>0$ such 
that for all $0<\abs{t}<\epsilon$, one has $\Phi_t^0(\overline \Gamma_0)\cap \overline\Gamma_0=\emptyset$.
Since the closed set $K=\{\Phi^0_t(x_0)\mid \epsilon\leq \abs{t}\leq T\}$ has empty
intersection with $\overline\Gamma_0$, there exists an open neighbourhood 
of $K$ which has empty intersection with $\overline \Gamma_0$. 
It follows that one can choose a subset $\Gamma\subset\Gamma_0$ (which is itself a manifold of dimension
$2n-2$) such that $\Phi_t^0(\Gamma)\cap\Gamma=\emptyset$ for all $t\not=0$.
Thus, we have 
constructed a ``local Poincar\'e section", i.e. 
$\Gamma$ parameterises 
$\{\Phi^0_t(x)\mid t\in\R, \  x\in\Gamma\}\subset A_0$
rather than the whole manifold $A_0$. 

Such a manifold $\Gamma$ can be constructed for any point $y_0\in\wt A_0$;
then the corresponding sets $\pi_0(\Gamma)$ form an open cover of $\wt A_0$. 
Let us choose a locally finite subcover $\{\pi_0(\Gamma_j)\}$ of this cover and a 
smooth partition of unity $\{\wt \zeta_j\}$ on $\wt A_0$ subordinate to this subcover
(i.e. for each $j$, $\supp \wt \zeta_j$ lies entirely within one of the sets $\pi_0(\Gamma_j)$).
Clearly, we have an associated partition of unity $\{\zeta_j\}$ 
on $A_0$, $\zeta_j=\wt \zeta_j\circ\pi_0$.

Of course, if Assumption~\ref{ass2} holds true, then the above atlas can be chosen 
to consist of just one map.

2. 
Since $\N$ is exact, there exists a 1-form $\beta$ on $\N$ such that $d\beta=\omega$. 
Let us construct $F\in C^\infty(\N)$ such that $\alpha=\beta+dF$ 
has the required properties. 
We will construct $F$ using the atlas described above. 

For each $j$, let us construct a function $F_j$ on $\Omega_j=\{\Phi^0_t(x)\mid t\in\R,\ x\in\Gamma_j\}$
such that $\beta(X_0)+dF_j(X_0)=0$ on $\Omega_j$. 
This can be done by setting $F_j=0$ on $\Gamma_j$ and then extending $F_j$ onto $\Omega_j$ by 
integrating the differential equation 
\begin{equation}
\frac{d}{dt} F_j\circ \Phi_t^0(x)=-\beta(X_0\circ \Phi_t^0(x))
\label{e4}
\end{equation}
along the orbits of $\Phi^0$.

Now let us define $F=\sum_j \zeta_j F_j$ on $A_0$ and extend $F$ onto the whole of $\N$
as a smooth function. Then on $A_0$ we have
\begin{equation}
i(X_0)\alpha=i(X_0)\beta+i(X_0)dF=
\sum_j\zeta_j \  (i(X_0)\beta+i(X_0)dF_j)+\sum_j F_j\   i(X_0)d\zeta_j.
\label{g1}
\end{equation}
The first sum in the r.h.s. of \eqref{g1} vanishes by the construction of $F_j$. 
Next, by the construction of $\zeta_j$ we have
$$
0=\frac{d}{dt}\zeta_j\circ \Phi_t^0(x)|_{t=0}=d\zeta_j(X_0(x)),
$$
and so the second sum in the r.h.s. of \eqref{g1} also vanishes. 
Thus, $\alpha$ satisfies \eqref{e0}.

3. Let us prove that there exists a 1-form $\wt \alpha$ on $\wt A_0$ such that 
$\pi_0^*\wt \alpha=\alpha$. 
First note that, by Cartan's formula for Lie derivative, 
$$
L_{X_0}\alpha=d i(X_0)\alpha+i(X_0)d\alpha
=0+i(X_0)\omega=-dH_0=0
$$
on $A_0$, where we have used \eqref{e0}.
It follows that 
\begin{equation}
(\Phi_t^0)^*\alpha |_{A_0}=\alpha |_{A_0}
\quad 
\text{ for all $t\in\R$.}
\label{c1}
\end{equation}
Next, let $x,y\in A_0$ and let $\xi\in T_x A_0$, $\eta\in T_y A_0$ be such that 
$\pi_0(x)=\pi_0(y)$ and $d_x\pi_0(\xi)=d_y\pi_0(\eta)$. 
This means that for some $t,c\in\R$, one has $y=\Phi_t^0(x)$ and 
$\eta=d_x\Phi_t^0(\xi)+c X_0(y)$. 
Then, using \eqref{e0} and \eqref{c1}, 
we obtain
$$
\alpha_y(\eta)=\alpha_y(d_x\Phi_t^0(\xi)+c X_0(y))=
\alpha_y(d_x \Phi_t^0(\xi))=\alpha_x(\xi).
$$
Thus, $\alpha: TA_0\to\R$ is constant on the pre-images of any point under the 
map $d\pi_0: TA_0\to T\wt A_0$. 
This shows that one can define a smooth 1-form $\wt \alpha$ on $A_0$ such that 
$\pi_0^*\wt \alpha=\alpha$. 

4. Let us prove that $d\wt \alpha=\wt \omega_0$. We have 
$$
\pi_0^*\wt \omega_0=\omega=d\alpha=d\pi_0^*\wt \alpha=\pi_0^*d\wt \alpha,
$$
and so $\pi_0^*(\wt \omega_0-d\wt \alpha)=0$. 
Since $\pi_0$ is a surjection, it follows that $d\wt \alpha=\wt \omega_0$. 
\end{proof}

\subsection{Separation of variables in integrals over $A_0$}
We will need the following version of Fubini's theorem:
\begin{lemma}\label{lma.g2}
Let $\mu$ be a 1-form on $A_0$ and $\Omega\subset A_0$ be an open pre-compact set.
Define 
$$
\delta(x)=\int_{-\infty}^\infty \chi_\Omega\circ \Phi_t^0(x)\  
(i(X_0)\mu)\circ\Phi_t^0(x)dt, 
\quad x\in A_0,
$$
and let $\wt \delta: \wt A_0\to \R$ be the corresponding function such that 
$\wt \delta\circ \pi_0=\delta$. 
Then 
$$
\int_{\Omega}\mu\wedge \omega^{n-1}
=
\int_{\wt A_0}\wt \delta \ 
\wt \omega_0^{n-1}.
$$
\end{lemma}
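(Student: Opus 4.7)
The plan is to work in the local Poincar\'e atlas constructed in the proof of Lemma~\ref{lma.g1}, reducing the claim to a Fubini-type computation on each chart and then gluing with a partition of unity.

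Take a locally finite cover of $\wt A_0$ by images $\pi_0(\Gamma_j)$ of local Poincar\'e sections $\Gamma_j$ as built in step~1 of Lemma~\ref{lma.g1}, together with a subordinate smooth partition of unity $\wt\zeta_j$ on $\wt A_0$, and set $\zeta_j=\wt\zeta_j\circ\pi_0$. Each $\zeta_j$ is constant along orbits of $\Phi^0_t$, so I write
$$
\int_{\Omega}\mu\wedge\omega^{n-1} \;=\; \sum_j \int_{A_0}\zeta_j\,\chi_\Omega\,\mu\wedge\omega^{n-1},
$$
and on the open saturation of $\Gamma_j$ I parametrise by the flow map $\Psi_j:\Gamma_j\times\R\to A_0$, $(z,t)\mapsto \Phi_t^0(z)$, which is a diffeomorphism onto its image.

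The local geometric core is the identity $\Psi_j^*(\mu\wedge\omega^{n-1}) = (i(X_0)\mu)\circ\Phi_t^0(z)\; dt\wedge(\omega|_{\Gamma_j})^{n-1}$. Since $\Phi_t^0$ preserves $\omega$ and $X_0$ lies in the kernel of $\omega|_{A_0}$ (because $i(X_0)\omega=-dH_0$ vanishes on $TA_0$), the pullback $\Psi_j^*(\omega|_{A_0})$ is simply $\omega|_{\Gamma_j}$, viewed on $\Gamma_j\times\R$ via the projection to $\Gamma_j$ with no $dt$-component. Decompose $\Psi_j^*\mu = \nu + g\,dt$, where $\nu$ is a $t$-dependent $1$-form on $\Gamma_j$ and $g(z,t)=\Psi_j^*\mu(\partial_t)=\mu(X_0)\circ\Phi_t^0(z)=(i(X_0)\mu)\circ\Phi_t^0(z)$. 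Then $\nu\wedge(\omega|_{\Gamma_j})^{n-1}$ is a $(2n-1)$-form pulled back from the $(2n-2)$-dimensional base $\Gamma_j$, hence it vanishes identically, and only the $g\,dt$ contribution survives.

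It remains to integrate and glue. Assumption~\ref{ass1}(iii) applied to $K=\overline{\Omega}$ shows that each orbit meets $\Omega$ in a bounded $t$-interval, so the inner $t$-integral in
$$
\int_{\Gamma_j\times\R}\zeta_j(z)\,\chi_\Omega(\Phi_t^0(z))\,g(z,t)\,dt\wedge(\omega|_{\Gamma_j})^{n-1}
$$
converges and, by Fubini, reproduces $\delta(z)$ on the $\Gamma_j$-slice. Using $(\pi_0|_{\Gamma_j})^*\wt\omega_0=\omega|_{\Gamma_j}$, $(\pi_0|_{\Gamma_j})^*\wt\delta=\delta$, and $(\pi_0|_{\Gamma_j})^*\wt\zeta_j=\zeta_j$ (with compatible orientations as fixed in Section~\ref{sec.2}), each summand equals $\int_{\wt A_0}\wt\zeta_j\,\wt\delta\,\wt\omega_0^{n-1}$, and summing over $j$ yields the claim. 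The main point to verify carefully is the horizontal vanishing in the local identity above, together with the matching of orientations between $\Gamma_j$ and the chart $\pi_0(\Gamma_j)\subset\wt A_0$; the rest is bookkeeping with the partition of unity and Fubini's theorem.
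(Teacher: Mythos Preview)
Your proof is correct and follows essentially the same approach as the paper: both use the local Poincar\'e sections $\Gamma_j$ and partition of unity from Lemma~\ref{lma.g1}, pull back along the flow map $\Gamma_j\times\R\to A_0$, and exploit that $\omega|_{A_0}$ pulls back with no $dt$-component (since $i(X_0)\omega=-dH_0=0$ on $A_0$) so that only the $(i(X_0)\mu)\,dt$ term survives in $\mu\wedge\omega^{n-1}$. Your phrasing ``pulled back from the $(2n-2)$-dimensional base'' is slightly imprecise since $\nu$ depends on $t$, but the conclusion is right: $\nu\wedge(\omega|_{\Gamma_j})^{n-1}$ is a $(2n-1)$-form with no $dt$-component on a product whose horizontal factor has dimension $2n-2$, hence vanishes.
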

We note that this lemma, with obvious modifications, can (and will) also be 
applied to integrals over $A$ and $\wt A$ instead of $A_0$, $\wt A_0$.

\begin{proof}
1. Let $\zeta_j$, $\wt \zeta_j$, $\Gamma_j$ be as in the proof of Lemma~\ref{lma.g1}.
It suffices to prove that
\begin{equation}
\int_{\Omega}\zeta_j \mu\wedge \omega^{n-1}
=
\int_{\wt A_0}\wt \zeta_j\   \wt \delta \ \wt \omega_0^{n-1}.
\label{g1a}
\end{equation}
Next, as in Section~\ref{sec.a4}, we have a map $\gamma_j: \Gamma_j\to \wt A_0$
which is a symplectic diffeomorphism onto its range. 
Using this map, we can rewrite the integral in the r.h.s. of \eqref{g1a} 
as the integral over $\Gamma_j$. 
Thus, it suffices to prove that 
\begin{equation}
\int_{\Omega}\zeta_j \mu\wedge \omega^{n-1}
=
\int_{\Gamma_j} \zeta_j(x)  \omega^{n-1}(x)
\int_{-\infty}^\infty \chi_\Omega\circ \Phi_t^0(x)\  
(i(X_0)\mu)\circ\Phi_t^0(x)dt.
\label{g2}
\end{equation}

2. 
Let us prove  \eqref{g2}.
Consider the map $\phi:\R\times \Gamma_j\to A_0$, $\phi(t,x)=\Phi_t^0(x)$.
We have
\begin{equation}
\int_{\Omega}\zeta_j
\mu \wedge\omega^{n-1}
=
\int_{\R\times\Gamma_j}
(\zeta_j\circ \phi)(\chi_\Omega\circ \phi)
(\phi^*\mu)\wedge(\phi^*\omega)^{n-1}.
\label{c12a}
\end{equation}
Since $\Phi_t^0$ is a symplectic map, we get for any $\xi,\eta\in T_x\Gamma_j$:
$$
(\phi^*\omega)_{(t,x)}(\xi,\eta)
=
\omega_{\Phi^0_t(x)}(d_x\Phi^0_t(\xi),d_x\Phi^0_t(\eta))
=
\omega_x(\xi,\eta).
$$
Further, denote $m=2n-2$; let $(t,x_1,\dots,x_m)$ be local coordinates on $\R\times \Gamma_j$
and let $\frac{\partial}{\partial t},\frac{\partial}{\partial x_1},\dots,\frac{\partial}{\partial x_m}$
be the corresponding basis in the tangent space $T_{(t,x)}(\R\times\Gamma_j)$. 
For any $\xi\in T_x\Gamma_j$, we have
$$
(\phi^*\omega)_{(t,x)}(\frac{\partial}{\partial t},\xi)
=
\omega_x(X_0(x),d_x\Phi^0_t(\xi))
=
-d_x H_0(d_x \Phi^0_t(\xi))
=0,
$$
and so
$$
((\phi^*\mu)\wedge(\phi^*\omega)^{n-1})_{(t,x)}
(\frac{\partial}{\partial t},\frac{\partial}{\partial x_1},\dots, \frac{\partial}{\partial x_m})
=
\mu_{\Phi^0_t(x)}(X_0(\Phi^0_t(x)))\omega_x^{n-1}(\frac{\partial}{\partial x_1},\dots, \frac{\partial}{\partial x_m}).
$$
It follows that we can separate integration over $\Gamma_j$ and over $\R$ in \eqref{c12a}, 
which yields the required identity \eqref{g2}.
\end{proof}

\section{Proof of Theorem~\ref{maintheorem}}\label{sec.c}

\subsection{A generating function of $\wt S_E$} 
Let $\alpha$ be a 1-form on $\N$ as  in Lemma~\ref{lma.g1}.
Let us define a function $\Delta$ on $A$ by 
\begin{equation}
\Delta(x)=-\int_{-\infty}^\infty (i(X)\alpha)\circ \Phi_t(x)dt, 
\quad x\in A. 
\label{c2}
\end{equation}
Note that by \eqref{e0}, for all sufficiently large $\abs{t}$ one has
$$
(i(X)\alpha)\circ \Phi_t(x)=(i(X_0)\alpha)\circ \Phi_t(x)=0
$$
and so the integration in 
\eqref{c2} is actually performed over a bounded set of $t$. 
It follows that $\Delta\in C^\infty(A)$.

It follows directly from the definition that  $\Delta$ is constant
on the orbits of $\Phi_t$ and therefore there exists $\wt \Delta\in C^\infty(\wt A)$ 
such that $\wt \Delta\circ\pi=\Delta$. 
Since $A\cap\supp(H-H_0)$ is compact, it follows that $\supp \wt \Delta$ is
compact and so $\wt \Delta \in C_0^\infty(\wt A)$.

\begin{lemma}
Under the assumptions of Theorem~\ref{maintheorem}, we have
$\wt S_E\in\Dom(\CAL, \wt A_0)$ and 
\begin{equation}
n\CAL(\wt S_E)=\int_{\wt A}\wt \Delta (x)\frac{\wt \omega^{n-1}(x)}{(n-1)!}.
\label{c3}
\end{equation}
\end{lemma}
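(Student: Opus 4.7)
The plan is to identify a compactly supported generating function of $\wt S_E$ and then to apply formula \eqref{e5} for $\CAL$. The central step is to prove the pointwise identity
\[
S_E^*\alpha|_{A_0}-\alpha|_{A_0}=-d(\Delta\circ W_-)
\]
as 1-forms on $A_0$; call this the \emph{generating identity}. Given it, the relations $\pi_0^*\wt\alpha=\alpha|_{A_0}$, $\pi_0\circ S_E=\wt S_E\circ\pi_0$, and $\pi\circ W_-=\wt W_-\circ\pi_0$ (Lemma~\ref{lma.g1} and \eqref{a12}) descend it to $\wt S_E^*\wt\alpha-\wt\alpha=d(-\wt\Delta\circ\wt W_-)$ on $\wt A_0$, so that $\wt F:=-\wt\Delta\circ\wt W_-$ is a generating function of $\wt S_E$. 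Since $\supp\wt\Delta$ is compact and $\wt W_-$ is a diffeomorphism, $\wt F$ is compactly supported, whence $\wt S_E\in\Dom(\CAL,\wt A_0)$.

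To prove the generating identity I would work on a fixed compact $K\subset A_0$ and choose $T>0$ so large that $W_+=\Phi_{-T}\circ\Phi_T^0$ and $W_-=\Phi_T\circ\Phi_{-T}^0$ hold on $K$ and that $\Phi_{\pm T}^0(K)\cap\supp(H-H_0)=\emptyset$. Using the invariance \eqref{c1} of $\alpha|_{A_0}$ under $(\Phi_t^0)^*$, the computation of $W_\pm^*\alpha-\alpha$ on $K$ reduces to pulling back the fundamental identity $(\Phi_{\mp T})^*\alpha-\alpha=\int_0^{\mp T}(\Phi_s)^*L_X\alpha\,ds$ via $(\Phi_{\pm T}^0)^*$. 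The key algebraic input is Cartan's formula combined with $i(X)\omega=-dH$, yielding $L_X\alpha=d(i(X)\alpha)-dH$. Restricted to $A_0$, the $dH$-contribution becomes the differential of $H\circ\Phi_{\pm T}^0$, which is identically $E$ on $K$ (because $\Phi_{\pm T}^0(K)$ lies in the free region where $H=H_0$), and therefore vanishes. After a change of integration variable exploiting $\Phi_s\circ\Phi_{\pm T}^0=\Phi_{s\pm T}\circ W_\pm$ on $K$, what remains is an exact 1-form, giving
\[
W_+^*\alpha-\alpha=-d(\Delta_+\circ W_+),\qquad W_-^*\alpha-\alpha=d(\Delta_-\circ W_-),
\]
where $\Delta_+(y):=\int_0^\infty(i(X)\alpha)(\Phi_u(y))\,du$, $\Delta_-(y):=\int_{-\infty}^0(i(X)\alpha)(\Phi_u(y))\,du$, and $\Delta_++\Delta_-=-\Delta$ by \eqref{c2}.

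These two formulas combine via $W_+\circ S_E=W_-$: writing $S_E^*\alpha-\alpha=-S_E^*(W_+^*\alpha-\alpha)+(W_-^*\alpha-\alpha)$ and substituting gives $S_E^*\alpha-\alpha=d(\Delta_+\circ W_-+\Delta_-\circ W_-)=-d(\Delta\circ W_-)$, which is the generating identity. The proof is then completed by invoking \eqref{e5}: after one application of Stokes's theorem using $d\wt\alpha=\wt\omega_0$, formula \eqref{e5} for $\CAL(\wt S_E)$ reduces to $-\frac1n\int_{\wt A_0}\wt F\,\frac{\wt\omega_0^{n-1}}{(n-1)!}$ for any compactly supported generating function $\wt F$; substituting $\wt F=-\wt\Delta\circ\wt W_-$ and pushing the integral through the symplectomorphism $\wt W_-:\wt A_0\to\wt A$ delivers \eqref{c3}.

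The main obstacle is the Lie-derivative computation of $W_\pm^*\alpha-\alpha|_{A_0}$: the cancellation of the $dH$-term hinges on $\Phi_{\pm T}^0$ carrying $K$ into the free region, a subtlety reliant on Assumption~\ref{ass1}(iii) and~(iv). Careful bookkeeping of signs and of the time-reparametrisation $s\mapsto s\pm T$ is also needed, but once performed the identity $\Delta_++\Delta_-=-\Delta$ built into the definition \eqref{c2} makes the generating identity fall out cleanly.
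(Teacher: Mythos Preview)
Your proposal is correct and follows essentially the same strategy as the paper: show that $\wt\Delta_0:=\wt\Delta\circ\wt W_-$ is a compactly supported generating function for $\wt S_E$ (i.e.\ $\wt\alpha-\wt S_E^*\wt\alpha=d\wt\Delta_0$), then apply the definition of $\CAL$ and change variables via the symplectomorphism $\wt W_-:\wt A_0\to\wt A$.

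The only noteworthy difference is organisational. You establish the generating identity by computing $W_+^*\alpha-\alpha$ and $W_-^*\alpha-\alpha$ separately (introducing half-integrals $\Delta_\pm$) and then combining them through $W_+\circ S_E=W_-$. The paper instead uses the single formula $S_E=\Phi^0_{-T}\circ\Phi_{2T}\circ\Phi^0_{-T}$ and applies the reduced-action identity $df_{2T}=\Phi_{2T}^*\alpha-\alpha$ directly on $A$, where $dH$ vanishes identically; this sidesteps your discussion of why the pulled-back $dH$-term drops out on $K$. Both routes rest on the same two inputs (the reduced-action formula \eqref{c5} and the invariance \eqref{c1}), so the difference is cosmetic. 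Likewise, your detour through \eqref{e5} followed by Stokes simply recovers the defining formula \eqref{e2}, which the paper invokes directly.
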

\begin{proof}

1. First recall the well known formula for the derivative of reduced action. 
Let $T>0$ and 
$$
f_T(x)=\int_0^T (i(X)\alpha)\circ \Phi_t(x) dt, \quad x\in A;
$$
then 
\begin{equation}
d f_T=\Phi_T^*\alpha-\alpha+TdH. 
\label{c5}
\end{equation}
On $A$, we have $H=E$ and so the last term in the r.h.s. of \eqref{c5} vanishes.

2. Let us check that the function $\Delta_0=\Delta\circ W_-\in C^\infty(A_0)$ 
satisfies the identity 
\begin{equation}
\alpha-S_E^*\alpha=d\Delta_0 
\quad \text{ on $A_0$.}
\label{c4}
\end{equation}

Let $K\subset A_0$ be a compact set. We can choose $T>0$ sufficiently large so that
for all $x\in K$, 
$$
S_E(x)=\Phi^0_{-T}\circ \Phi_{2T}\circ \Phi_{-T}^0(x)
$$
and 
$$
\Delta(x)
=
-\int_{-T}^T(i(X)\alpha)\circ \Phi_t(x)dt
=
-f_{2T}\circ \Phi_{-T}(x).
$$
Thus, using \eqref{c1} and \eqref{c5},  we have on $K$: 
\begin{multline*}
S_E^*\alpha
=
(\Phi^0_{-T})^* (\Phi_{2T})^*(\Phi^0_{-T})^*\alpha
=
(\Phi^0_{-T})^*(df_{2T}+\alpha)
\\
=
d(f_{2T}\circ\Phi^0_{-T})+\alpha
=
d(f_{2T}\circ \Phi_{-T}\circ W_-)+\alpha
=
-d(\Delta\circ W_-)+\alpha=-d\Delta_0+\alpha, 
\end{multline*}
which proves \eqref{c4}. 

3. Let $\wt \Delta_0\in C_0^\infty(\wt A_0)$ be a function such that 
$\wt \Delta_0\circ\pi_0=\Delta_0$. Using the formula 
$\alpha=\pi_0^*\wt \alpha$ and \eqref{a11}, we can rewrite \eqref{c4} as
$$
\pi_0^*(\wt \alpha-\wt S_E^*\wt \alpha)=\pi_0^*d\wt \Delta_0.
$$
Since $\pi_0$ is a surjection, it follows that 
\begin{equation}
\wt \alpha-\wt S_E^*\wt \alpha=d\wt \Delta_0 
\quad \text{ on $\wt A_0$, }
\label{c6}
\end{equation}
i.e. $\wt \Delta_0$ is a generating function of $\wt S_E$. 
Thus, $\wt S_E\in\Dom(\CAL, \wt A_0)$ and, according to the 
definition \eqref{e2},
\begin{equation}
n\CAL(\wt S_E)=\int_{\wt A_0}\wt \Delta_0(x)
\frac{\wt \omega_0^{n-1}(x)}{(n-1)!}.
\label{c7}
\end{equation}

4. 
Let $\wt W_-$ be as in \eqref{a12}. Since $\wt W_-^*\wt \omega=\wt \omega_0$, 
by a change of variable in the integral \eqref{c7} we obtain \eqref{c3}.
\end{proof}

\subsection{Application of Stokes' formula}
Let us apply Stokes' formula to rewrite the integral \eqref{xi} in the definition of $\xi$.
Let $\alpha$ be a 1-form on $\N$ as in Lemma~\ref{lma.g1}.
We first note that 
\begin{equation}
\alpha\wedge\omega^{n-1}|_{A_0}=0.
\label{c9}
\end{equation}
Indeed, $i(X_0)\alpha |_{A_0}=0$ by \eqref{e0}, and $i(X_0)\omega=-dH_0=0$ 
on $A_0$ since $A_0$ is a constant energy surface. Thus, 
$i(X_0)(\alpha\wedge\omega^{n-1})|_{A_0}=0$; 
since $\alpha\wedge\omega^{n-1}$ has a maximal rank on $A_0$,
it follows that \eqref{c9} holds true. 
\begin{lemma}\label{lma.c2}
The identity 
\begin{equation}
\xi(E)=-\frac1{n!}\int_{A(E)}\alpha\wedge \omega^{n-1}
\label{c10}
\end{equation}
holds true. 
\end{lemma}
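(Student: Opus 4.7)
The plan is to apply Stokes' theorem, exploiting the identity $\omega^n = d(\alpha\wedge\omega^{n-1})$ (which follows from $d\alpha=\omega$ and $d\omega=0$), and then to use the vanishing relation \eqref{c9} for the piece of the boundary lying on $A_0$.

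First I would localise the integral in \eqref{xi}. By Assumption~\ref{ass1}(iv) the set
\[
K^* := \bigl(G_0(E)\cap\supp(H-H_0)\bigr)\cup\bigl(G(E)\cap\supp(H-H_0)\bigr)
\]
is compact, and on $\N\setminus\supp(H-H_0)$ the identity $H=H_0$ forces $\chi_{G_0(E)}=\chi_{G(E)}$. Hence $\chi_{G_0(E)}-\chi_{G(E)}$ is supported inside $K^*$. I would choose a pre-compact open set $U\supset K^*$ with smooth boundary $\partial U$ disjoint from $\supp(H-H_0)$ and transverse to both $A_0(E)$ and $A(E)$; existence of such $U$ is a standard genericity argument using Sard's theorem on a smooth exhaustion function of $\N$.

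Next I would apply Stokes' theorem separately to the compact manifolds-with-corners $G_0(E)\cap\overline U$ and $G(E)\cap\overline U$ with the primitive $\alpha\wedge\omega^{n-1}$. Using the orientation conventions from Section~\ref{sec.2} (under which $A_0$, $A$ carry the induced boundary orientation as $\partial G_0$, $\partial G$), subtracting the two resulting identities gives
\[
n!\,\xi(E) = \int_{A_0\cap U}\alpha\wedge\omega^{n-1} - \int_{A\cap U}\alpha\wedge\omega^{n-1} + \int_{G_0\cap\partial U}\alpha\wedge\omega^{n-1} - \int_{G\cap\partial U}\alpha\wedge\omega^{n-1}.
\]
Since $\partial U$ avoids $\supp(H-H_0)$, the function $H$ coincides with $H_0$ on $\partial U$, so $G_0\cap\partial U=G\cap\partial U$, and the two carry the same induced orientation from $\partial U$; therefore the last two terms cancel. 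The first term vanishes by \eqref{c9}.

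Finally, on $A(E)\setminus U\subset A(E)\setminus K^*$ one has $H=H_0$ in a neighbourhood, so $A$ and $A_0$ locally coincide there, and the same reasoning that gives \eqref{c9} (namely $i(X)\alpha=i(X_0)\alpha=0$ and $i(X)\omega=-dH=0$ in that neighbourhood) shows that $\alpha\wedge\omega^{n-1}$ restricts to zero on $A(E)\setminus U$. Hence $\int_{A\cap U}\alpha\wedge\omega^{n-1} = \int_A\alpha\wedge\omega^{n-1}$, which yields \eqref{c10}. The only mildly delicate step is producing the cutoff $U$ with $\partial U$ simultaneously transverse to both energy surfaces; once this genericity is arranged, the rest of the argument is purely an exercise in orientation bookkeeping and cancellation.
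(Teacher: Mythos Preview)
Your argument is correct and follows essentially the same route as the paper's own proof: localise using a compact region containing $\supp(H-H_0)\cap(G\cup G_0)$, apply Stokes with the primitive $\alpha\wedge\omega^{n-1}$, cancel the contributions from the outer boundary (where $H=H_0$), and kill the $A_0$-piece via \eqref{c9}. The only difference is that you are more explicit about the transversality needed to treat $G_0\cap\overline U$ and $G\cap\overline U$ as manifolds with corners, a point the paper leaves implicit.
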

\begin{proof}
1. Let $\Omega\subset \N$ be a compact set with a smooth boundary 
such that $\supp(H-H_0)\cap G(E)\subset\Omega$ and 
$\supp(H-H_0)\cap G_0(E)\subset\Omega$. Since 
$$
A(E)\setminus\Omega=A_0(E)\setminus\Omega, 
$$
by \eqref{c9} the integrand in \eqref{c10} vanishes outside $\Omega$. 
Thus, the integration in \eqref{c10} is in fact performed over $A(E)\cap \Omega$ 
and so the r.h.s. is finite. 

2. Writing $G=G(E)$ and $G_0=G_0(E)$
for brevity, we get
$$
\xi(E)
=
\int_{\N}(\chi_{G_0}-\chi_G)\frac{\omega^n}{n!}
=
\int_{\N\cap\Omega}(\chi_{G_0}-\chi_G)\frac{\omega^n}{n!}
=
\int_{G_0\cap\Omega}\frac{\omega^n}{n!}
-
\int_{G\cap\Omega}\frac{\omega^n}{n!}.
$$
We have $d(\alpha\wedge\omega^{n-1})=\omega^n$
and therefore, by the Stokes' formula, 
\begin{multline*}
n! \xi(E)
=
\int_{\partial (G_0\cap \Omega)}\alpha\wedge \omega^{n-1}
-
\int_{\partial (G\cap \Omega)}\alpha\wedge \omega^{n-1}
\\
=
\int_{A_0(E)\cap \Omega}\alpha\wedge \omega^{n-1}
-
\int_{A(E)\cap \Omega}\alpha\wedge \omega^{n-1}
=
-\int_{A(E)}\alpha\wedge \omega^{n-1}
\end{multline*}
since the integrals over 
$G_0\cap\partial {\Omega}$ and $G\cap\partial{\Omega}$ 
cancel out. 
\end{proof}

\subsection{The rest of the proof}\label{sec.c4}

By \eqref{c3} and \eqref{c10}, it remains to prove that 
$$
-\int_{A}\alpha\wedge \omega^{n-1}
=
\int_{\wt A}\wt \Delta(x)\wt \omega^{n-1}(x).
$$
This follows from Lemma~\ref{lma.g2} with $\mu=\alpha$, $\Omega=A$.

\section{Time delay: Proof of Theorems~\ref{th4} and \ref{th2} }\label{sec.6}

\subsection{Proof of Theorem~\ref{th4}}

1. Our first aim is to prove that the r.h.s. of \eqref{e6a} is independent of $k$ for all sufficiently large $k$. 
Denote $K_0=\supp(H-H_0)\cap A_0$ and $\wt K_0=\pi_0(K_0)$.
It is easy to see that $\supp(\wt u_k\circ\wt W_- -\wt u_k^0)\subset \wt K_0$ 
and therefore the integration in the r.h.s. of \eqref{e6a} is actually performed over $\wt K_0$. 

By Assumption~\ref{ass1}(iii), there exists $T>0$ such that 
for all $\abs{t}\geq T$ one has
\begin{equation}
\Phi_t^0(K_0)\cap K_0=\emptyset.
\label{i22}
\end{equation}
Then we have 
\begin{align*}
\Phi_t\circ W_-(x)&=\Phi_t^0(x), \quad t\leq -T, \quad \forall x\in K_0,
\\
\Phi_t\circ W_-(x)&=\Phi_t^0\circ S_E(x), \quad t\geq T \quad \forall x\in K_0.
\end{align*}
Let us choose $\ell$ sufficiently large so that for all $\abs{t}\leq T$, we have
\begin{equation}
\Phi_t^0(K_0)\subset \Omega_\ell, 
\quad
\Phi_t\circ W_-(K_0)\subset \Omega_\ell.
\label{i23}
\end{equation}
Then for all $x\in K_0$ and all $k\geq \ell$: 
\begin{equation}
u_k\circ W_-(x)-u_k^0(x)
=
\int_0^\infty (\chi_{\Omega_k}\circ \Phi_t^0\circ S_E(x)-\chi_{\Omega_k}\circ \Phi_t^0(x))dt. 
\label{i5}
\end{equation}
Next, let us define
\begin{equation}
v_k(x)=\int_0^\infty \chi_{\Omega_k\setminus\Omega_\ell}\circ \Phi_t^0(x)dt, 
\quad x\in K_0, \quad k\geq \ell. 
\label{i6}
\end{equation}
Comparing \eqref{i5} and \eqref{i6}, we get
\begin{equation}
(u_k\circ W_-(x)-u_k^0(x))
-
(u_\ell\circ W_-(x)-u_\ell^0(x))
=
v_k\circ S_E(x)-v_k(x), \quad x\in K_0. 
\label{i24}
\end{equation}
From  \eqref{i23} it follows that $v_k\circ \Phi_t^0(x)=v_k(x)$ for all $x\in K_0$ and $\abs{t}\leq T$. 
Let us define $\wt v_k: \wt K_0\to\R$ such that $\wt v_k\circ \pi_0(x)=v_k(x)$, 
$x\in K_0$.
Then from \eqref{i24} it follows that 
\begin{multline}
\int_{\wt K_0}
\left\{
(\wt u_k\circ \wt W_-(y)-\wt u_k^0(y))
-
(\wt u_\ell\circ \wt W_-(y)-\wt u_\ell^0(y))
\right\}
\frac{\omega^{n-1}_0(y)}{(n-1)!}
\\
=
\int_{\wt K_0}(\wt v_k\circ \wt S_E(y)-\wt v_k(y))\frac{\wt \omega_0^{n-1}(y)}{(n-1)!}.
\label{i7}
\end{multline}
Since $\wt S_E: \wt A_0\to\wt A_0$ is a symplectic diffeomorphism and $\wt S_E(x)=x$ for $x\notin\wt K_0$, 
we see that the r.h.s. of \eqref{i7} vanishes. This proves that  the r.h.s. of \eqref{e6a} is independent of $k\geq \ell$.
Thus, the limit \eqref{e6} exists.

2. Let us prove that the limit in \eqref{e6} is independent of the choice of the sequence
$\{\Omega_k\}$. 
First note that the limit \eqref{e6} can be calculated over any subsequence of $\{\Omega_k\}$.
Next, let $\{\Omega_k'\}$ be another sequence of sets with the same properties as $\{\Omega_k\}$.
Then it is easy to construct sequences of indices $p_1<p_2<\cdots$ and $q_1<q_2<\cdots$ 
such that 
\begin{equation}
\Omega_{p_1}\subset \Omega'_{q_1}\subset \Omega_{p_2}\subset\Omega'_{q_2}\subset\cdots
\label{i7a}
\end{equation}
Then the sequence $\{\Omega_{p_k}\}$ is a subsequence of both the sequence \eqref{i7a}
and the sequence $\{\Omega_k\}$. 
It follows that the limits \eqref{e6} over the sequence \eqref{i7a} and over the sequence $\{\Omega_k\}$ 
coincide. 
In the same way, the limits \eqref{e6} over the sequence \eqref{i7a} and over the sequence $\{\Omega_k'\}$ 
coincide. 

3. Let us prove \eqref{e7}. Since $\supp(H-H_0)\cap A_0$ is compact, there exists a compact set 
$\Gamma_0\subset \Gamma$ such that for all $z\in\Gamma\setminus \Gamma_0$ and all $t\in\R$, 
one has $\Phi_t(z)=\Phi_t^0(z)$ (for example, one can take $\Gamma_0=\Gamma\cap K_0$).
Then we have $\supp\tau_E\subset \Gamma_0$ and also 
$$
T_E=\int_{\Gamma_0} (u_k\circ W_-(z)-u_k^0(z))\frac{\omega^{n-1}(z)}{(n-1)!}, 
$$
for all sufficiently large $k$. 

Let $\ell$ be sufficiently large so that \eqref{i23} holds true and also assume 
(by increasing $\ell$ if necessary) that for all $k\geq \ell$ and all $z\in\Gamma_0$, one has
$\Phi_t^0(z)\in\Omega_k$ for $\abs{t}\leq \abs{\tau(z)}$.
Using \eqref{i5} and  the representation \eqref{a13} for the scattering map, we get for all $z\in\Gamma_0$: 
\begin{multline}
u_k\circ W_-(z)-u_k^0(z)
=
\int_0^\infty \{\chi_{\Omega_k}\circ\Phi_t^0\circ \Phi^0_{-\tau(z)}\circ \wt s_E(z)
-\chi_{\Omega_k}\circ \Phi_t^0(z)\}dt
\\
=
\int_0^\infty \{\chi_{\Omega_k}\circ\Phi_t^0\circ \wt s_E(z)
-\chi_{\Omega_k}\circ \Phi_t^0(z)\}dt
+
\int_{-\tau(z)}^0 \chi_{\Omega_k}\circ\Phi_t^0\circ \wt s_E(z)dt
\\
=
\int_0^\infty \{\chi_{\Omega_k}\circ\Phi_t^0\circ \wt s_E(z)
-\chi_{\Omega_k}\circ \Phi_t^0(z)\}dt
+\tau(z).
\label{i8}
\end{multline}
Next, since $\wt s_E:\Gamma\to\Gamma$ is a symplectic diffeomorphism, we get
\begin{equation}
\int_{\Gamma_0}\chi_{\Omega_k}\circ \Phi_t^0\circ \wt s_E(z) \omega^{n-1}(z)
=
\int_{\Gamma_0}\chi_{\Omega_k}\circ \Phi_t^0 (z) \omega^{n-1}(z)
\label{i9}
\end{equation}
for all $t\in\R$. Finally, integrating \eqref{i8} over $\Gamma_0$ with respect to 
the symplectic volume form
$\frac{\omega^{n-1}}{(n-1)!}$ and using \eqref{i9}, we arrive at \eqref{e7}.

\subsection{Proof of Theorem~\ref{th2}}
1.
Let $\Omega_1\subset \Omega_2\subset\dots\subset \N$ be a sequence of open 
pre-compact sets such that $\cup_{k=1}^\infty \Omega_k=\N$. 
Choose $\ell$ sufficiently large so that 
$$
G(E_1)\cap\supp(H-H_0)\subset\Omega_\ell
\quad\text{ and }
G_0(E_1)\cap\supp(H-H_0)\subset\Omega_\ell.
$$
Then for all $k\geq\ell$, 
$$
\frac{d\xi}{dE}(E)
=
\frac{d}{dE}
\left\{
\int_{G_0(E)\cap \Omega_k}\frac{\omega^n}{n!}
-
\int_{G(E)\cap \Omega_k}\frac{\omega^n}{n!}
\right\}.
$$

2.
Let $Y_0$ be a vector field defined in a neighbourhood of $A_0(E)\cap\Omega_k$
such that 
$dH_0(Y_0) |_{A_0(E)}=1$.
Similarly, let $Y$ be a vector
field defined in a neighbourhood of $A(E)\cap\Omega_k$ such that 
$dH(Y) |_{A(E)}=1$.
Then 
\begin{multline*}
\frac{d\xi}{dE}(E)
=
\frac1{n!}
\int_{A_0(E)\cap\Omega_k}i(Y_0)\omega^n
-
\frac1{n!}
\int_{A(E)\cap\Omega_k}i(Y)\omega^n
\\
=
\frac1{(n-1)!}
\int_{A_0(E)\cap\Omega_k}(i(Y_0)\omega)\wedge\omega^{n-1}
-
\frac1{(n-1)!}
\int_{A(E)\cap\Omega_k}(i(Y)\omega)\wedge\omega^{n-1}.
\end{multline*}

3.
Let us apply Lemma~\ref{lma.g2} to the integrals in the r.h.s. of the last
identity.
We will take $\mu=i(Y_0)\omega$ for the first integral and $\mu=i(Y)\omega$
for the second integral.
Note that
$i(X)i(Y)\omega=i(Y)dH=1$
by our choice of $Y$, and in the same way $i(X_0)i(Y_0)\omega=1$.
Application of Lemma~\ref{lma.g2} yields
$$
\frac{d\xi}{dE}(E)
=
\int_{\wt A_0}\wt u^0_k(y)  \frac{\wt \omega_0^{n-1}(y)}{(n-1)!}
-
\int_{\wt A}\wt u_k(y)  \frac{\wt \omega^{n-1}(y)}{(n-1)!}
$$
with $\wt u_k^0$, $\wt u_k$ as in Section~\ref{sec.ttd}. 
By \eqref{e6}, the r.h.s. is $(-T_E)$, as required.

\section{Proof of Theorem~\ref{th3}}\label{sec.i}
1. Let $\Phi^s_t$ be the Hamiltonian flow of $H_s$.
For each $s$, one can consider the constant energy surface $A_s=A_s(E)$; 
we have the symplectic reduction procedure which yields the manifold $\wt A_s$ of the 
orbits of $\Phi^s_t$. Let $\pi_s:A_s\to\wt A_s$ be the natural projection. Finally, let 
 $W_\pm^s$, $S^s_E$, $\wt S^s_E$ 
be the wave operator, the scattering operator, and the scattering symplectomorphism corresponding to 
the pair of Hamiltonians $H_0$, $H_s$. Denote
$$
F_s(x)=\frac{d}{ds}H_s(x), \quad
Y_s(x)=\frac{d}{ds}X_s(x);
$$ 
then we have 
$i(Y_s)\omega=-dF_s$.

2. 
Define 
\begin{equation}
\label{i16}
a_s(x)=\int_{-\infty}^\infty F_s\circ \Phi_t^s(x)dt, 
\quad x\in A_s.
\end{equation}
By our assumptions, $F_s=0$ on $A_s$ outside a compact set. 
Thus, the integration in \eqref{i16} is performed over a bounded set of $t$. 
It follows that $a_s\in C^\infty(A_s)$.

It is clear that $a_s\circ \Phi_t^s=a_s$ for all $t\in\R$, and therefore 
one can define $\wt a_s\in C^\infty(\wt A_s)$ such that 
$\wt a_s\circ \pi_s=a_s$. Below we will prove that 
\begin{equation}
i\left(\left(\frac{d}{ds}\wt S^s_E\right)\circ (\wt S_E^s)^{-1}\right)\wt \omega_0=-d(\wt a_s\circ \wt W_+^s)
\quad
\text{on $\wt A_0$,}
\label{i17}
\end{equation}
i.e. the family of Hamiltonians $\wt a_s\circ \wt W_+^s$, $s\in[0,1]$, generates the 
symplectomorphism $\wt S^1_E$. 

Note that  the family $\wt a_s\circ \wt W_+^s$ is compactly supported. 
Indeed, let $K\subset \N$ be a compact set such that 
$$
 A_0(E)\cap \bigl(\cup_{s\in[0,1]} \supp(H_s-H_0)\bigr)\subset K
$$
and let $\wt K=\pi_0(K)$. Then 
it is easy to see that for $x\in \wt A_0\setminus \wt K$, one has
$\wt a_s\circ \wt W_+^s(x)=0$. 
Thus, in order to prove the theorem, it suffices to check \eqref{i17}.

3. In order to prove  \eqref{i17}, below we will check 
the identity
\begin{equation}
i\left(\left(\frac{d}{ds}S^s_E\right)\circ (S_E^s)^{-1}\right)\omega=-d(a_s\circ W_+^s)
\quad \text{on $A_0$.}
\label{i18}
\end{equation}
The identity \eqref{i18} can be written as 
\begin{equation}
\pi_0^*\left( 
i(\frac{d}{ds}\wt S^s_E)\wt \omega_0+d(\wt a_s\circ \wt W_+^s)
\right) =0.
\label{i19}
\end{equation}
Since $\pi_0$ is surjective, \eqref{i19} yields \eqref{i17}. Thus, it suffices to prove \eqref{i18}. 
But first we need to prepare two auxiliary identities: \eqref{i20} and \eqref{i21}.

4. By a direct differentiation, we have
$$
\frac{d}{ds}\frac{d}{dt}\Phi_{-t}^0\circ\Phi_t^s(x)|_{s=0}
=
d_{\Phi_t^0(x)}\Phi^0_{-t}(Y_0\circ \Phi_t^0(x)).
$$
Interchanging the order of differentiation in the l.h.s. and integrating over $t$ 
from $0$ to $T$, we obtain
$$
\frac{d}{ds}\Phi_{-T}^0\circ\Phi_T^s(x)|_{s=0}
=
\int_0^T
d_{\Phi_t^0(x)}\Phi^0_{-t}(Y_0\circ \Phi_t^0(x))dt.
$$
It follows that 
$$
\frac{d}{ds}\Phi_T^s(x)|_{s=0}
=
\int_0^T
d_{\Phi_t^0(x)}\Phi^0_{T-t}(Y_0\circ \Phi_t^0(x))dt.
$$
In the same way, for any $s\in[0,1]$, we get
\begin{equation}
\frac{d}{ds}\Phi_T^s(x)
=
\int_0^T  d_{\Phi_t^s(x)}\Phi_{T-t}^s (Y_s\circ \Phi_t^s(x)) \, dt.
\label{i20}
\end{equation}

5. Let us define a vector field $V_s$ on $A_s$ by 
$$
V_s(x)=\int_{-\infty}^\infty d_{\Phi_t^s(x)}\Phi^s_{-t} (Y_s\circ \Phi_t^s(x))dt.
$$
Using the identities $i(Y_s)\omega=-dF_s$ and 
$$
\omega(d\Phi^s_{-t}\xi,\eta)=\omega(\xi,d\Phi_t^s\eta),
$$
one easily verifies that 
\begin{equation}
i(V_s)\omega=-da_s
\quad \text{on}\quad A_s
\label{i21}
\end{equation}
for all $s\in[0,1]$. 

6. Using formulas \eqref{a7} for $W_+^s$, \eqref{a1} for $S_E^s$ and \eqref{i20},
we obtain
\begin{multline*}
\left(\frac{d}{ds}S_E^s\right) \circ (S_E^s)^{-1}
=
d\Phi^0_{-T} \left( \left(\frac{d}{ds}\Phi_{2T}^s \right) \circ \Phi^s_{-2T}\circ \Phi_T^0\right)
=
d\Phi^0_{-T} \left( \left(\frac{d}{ds}\Phi_{2T}^s\right) \circ \Phi^s_{-T}\circ W_+^s \right)
\\
=
d((W_+^s)^{-1})\int_0^{2T} d\Phi_{T-t}^s(Y_s\circ \Phi_{t-T}^s\circ W_+^s)dt
=
(d W_+^s)^{-1}(V_s\circ W_+^s).
\end{multline*}
Using \eqref{i21}, we obtain
$$
i\left( \left(\frac{d}{ds}S_E^s\right)\circ (S_E^s)^{-1}\right)\omega
=
i( (dW_+^s)^{-1}(V_s\circ W_+^s)\omega=-d(a_s\circ W_+^s),
$$
as required in \eqref{i18}.

\section*{Appendix A: Key formulas in quantum scattering}
\label{sec.appa}
\renewcommand{\theequation}{A.\arabic{equation}}
\renewcommand{\thetheorem}{A.\arabic{theorem}}
\renewcommand{\thesubsection}{A.\arabic{subsection}}
\setcounter{theorem}{0}
\setcounter{equation}{0}

Here we collect those definitions and formulas in quantum scattering theory which are relevant
to the rest of the paper. We do not make any attempt at being rigourous or even precise
about the required assumptions. Our cavalier approach will probably horrify experts
in quantum scattering but this collection of formulas might be useful 
for the purposes of comparison of ``classical'' and ``quantum'' cases.

Let $H_0$ and $H$ be self-adjoint operators in a Hilbert space $\mathcal H$. 
In order to simplify our discussion, let us assume that both $H_0$ and $H$ 
have purely absolutely continuous spectrum (see e.g. \cite[Section~VII.2]{RS1}).
The wave operators $W_\pm:\mathcal H\to\mathcal H$ are defined by
$$
W_\pm \psi=\lim_{t\to\pm\infty} e^{itH}e^{-itH_0}\psi,
\quad \psi\in\mathcal H,
$$
whenever these limits exist. The wave operators are easily seen to be isometric and intertwine
$H$ and $H_0$: $W_\pm H_0=HW_\pm$. 
Completeness of the wave operators is the relation
$$
\Ran W_+=\Ran W_-=\mathcal H.
$$
If $H$ has a non-empty pure point or singular continuous spectrum, then $\mathcal H$ in the 
right hand side of the last relation has to be replaced by the absolutely continuous subspace of $H$. 
See \cite[Sections~2.1, 2.3]{Yafaev} for the details. 

If the wave operators exist and are complete, one defines the scattering operator
$S=W_+^{-1}W_-$. 
The scattering operator is unitary and commutes with $H_0$: $SH_0=H_0S$. 
It follows that $S$ is diagonalised by the direct integral of the spectral decomposition of $H_0$:
\begin{equation}
\mathcal H=\int^\oplus_{\Spec(H_0)} {\mathfrak h} (E)dE,
\quad
(H_0f)(E)=Ef(E),
\quad
(Sf)(E)=S_Ef(E).
\label{di}
\end{equation}
Here $S_E$ is a unitary operator in the fibre space $\mathfrak h(E)$; $S_E$ is called 
the scattering matrix. See \cite[Section~2.4]{Yafaev} for the details. 

Let $P^{(k)}$ be a family of operators in $\mathcal H$ such that $P^{(k)}\psi\to\psi$ 
as $k\to\infty$ for any $\psi\in\mathcal H$. Let $\mathcal T^{(k)}$ be the operator defined by 
$$
(\mathcal T^{(k)}\psi,\psi)
=
\int_{-\infty}^\infty \norm{P^{(k)}e^{-itH}W_-\psi}^2\  dt
-
\int_{-\infty}^\infty \norm{P^{(k)}e^{-itH_0}\psi}^2\  dt,
$$
assuming that these integrals exist. 
Then $\mathcal T^{(k)}$ commutes with $H_0$ and therefore is diagonal
with respect to the direct integral decomposition \eqref{di}.
Let $\mathcal T^{(k)}_E:\mathfrak h(E)\to \mathfrak h(E)$ be  the corresponding 
fibre operator. Then the limit $\mathcal T_E=\lim_{k\to\infty} \mathcal T^{(k)}_E$,
whenever it exists, is called the global time delay operator, and 
$T_E=\Tr \mathcal T_E$ is called the global time delay. 
See \cite{Robert1} for the details. 

The spectral shift function $\xi(E)$ is defined by the relation
$$
\xi(E)=\Tr(\chi_{(-\infty, E)}(H_0)-\chi_{(-\infty,E)}(H))
$$
which has to be understood in a certain regularised sense; see \cite[Section~8.2]{Yafaev}
for the details.
The existence of the spectral shift function and the global time delay requires some
trace class assumptions on $H$ and $H_0$, such as $H-H_0\in\text{Trace class}$.

The Birman-Krein formula reads
\begin{equation}
\det S_E=e^{-2\pi i\xi(E)};
\label{BK}
\end{equation}
see \cite[Section~8.4]{Yafaev} for the details. 
The Eisenbud-Wigner formula reads
\begin{equation}
\frac{d}{dE}\Im\log\det S_E=T_E;
\label{EW}
\end{equation}
see \cite{Robert1} for the details.

\section*{Appendix B: Symplectic diffeomorphisms and the Calabi invariant}
\label{sec.app}
\renewcommand{\theequation}{B.\arabic{equation}}
\renewcommand{\thetheorem}{B.\arabic{theorem}}
\renewcommand{\thesubsection}{B.\arabic{subsection}}
\setcounter{theorem}{0}
\setcounter{equation}{0}

\subsection{Symplectomorphisms}  
We recall some notation and preliminaries from symplectic topology; see e.g. \cite{McDuff} 
for the details. 

Let $(\M,\omega)$ be a non-compact $2m$-dimensional symplectic manifold, 
possibly with boundary.
We need the following notation: 

$\Symp( \M)$ is the group of all symplectomorphisms (=symplectic diffeomorphisms) on $ \M$.

$\Symp^c( \M)$ is the group of all symplectomorphisms of $ \M$ with compact support, 
$\supp\psi=\Clos \{x\mid \psi(x)\not=x\}$. 

$\Symp^c_0( \M)$ is the path connected component of the identity map in $\Symp^c( \M)$. 

$\Ham^c( \M)$ is the set of all compactly supported Hamiltonian symplectomorphisms of $ \M$. 
This means that $\psi\in\Ham^c( \M)$ can be constructed as a time one flow of a family of time-dependent
compactly supported Hamiltonians. More precisely, $\psi\in\Ham^c( \M)$ means that there exists a smooth family $h_t$, 
$t\in[0,1]$ of Hamiltonians on $\M$ such that $\cup_{t\in[0,1]}\supp h_t$ lies in a compact set and if 
$\psi_t$ is the corresponding flow and $X_t$ the corresponding vector field, 
\begin{equation}
\frac{d}{dt}\psi_t=X_t\circ\psi_t, 
\quad
i(X_t) \omega=-dh_t,
\quad
\psi_0=id,
\label{e3}
\end{equation}
then $\psi=\psi_1$. 

It is easy to see  (cf. \cite[Section~10]{McDuff}) that
\begin{equation}
\Ham^c( \M)\subset\Symp_0^c( \M)\subset \Symp^c( \M). 
\label{e8}
\end{equation}

\subsection{The Calabi invariant}
Let us assume that $\M$ is  exact, i.e. there is a 1-form $\alpha$ such that $ \omega=d\alpha$. 
We recall the definition of the Calabi invariant $\CAL$; for the details, see \cite{McDuff}. 
For our purposes we need to define $\CAL$ on a wider set of symplectomorphisms
than it is usually done. Let $\Dom(\CAL,  \M)$ be the set of all $\psi\in\Symp^c( \M)$ 
such that there exists a 1-form $\alpha$ and $f\in C_0^\infty( \M)$ with $d\alpha= \omega$ 
and $\alpha-\psi^*\alpha=df$. 
In this case we will say that $f$ is a generating function of $\psi$. 

If $\psi\in\Dom(\CAL,  \M)$ and $f$ is a generating function of $\psi$, let us 
define
\begin{equation}
\CAL(\psi)=\frac{1}{m+1}\int_{\M}f(x) \frac{\omega^m(x)}{m!}. 
\label{e2}
\end{equation}
Note that our sign conventions and normalisation differ from those of \cite{McDuff}. 

A symplectomorphism can have many generating functions. However, we have
\begin{proposition}
(i) 
$\CAL(\psi)$ is independent of the choice of a generating function $f$ of $\psi\in \Dom(\CAL,\M)$.

(ii)
If $\alpha-\psi^*\alpha=df$ with $f\in C_0^\infty(\M)$, then $\CAL(\psi)$ can  be calculated as
\begin{equation}
\CAL(\psi)=\frac1{(m+1)!}\int_\M (\psi^* \alpha)\wedge \alpha \wedge \omega^{m-1}.
\label{e5}
\end{equation}

(iii)
Suppose $\psi\in\Ham^c( \M)$ is generated by a family of Hamiltonians $\{h_t\}$, see \eqref{e3}. 
Then $\psi\in \Dom(\CAL,\M)$ and 
$$
\CAL(\psi)=\int_0^1 dt\int_{ \M} h_t(x) \frac{\omega^m(x)}{m!}.
$$
\end{proposition}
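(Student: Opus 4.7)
My plan is to establish part (ii) first as an algebraic identity, then to deduce part (i) from it, and finally to compute both sides in (iii) by differentiating an explicit family of generating functions along the isotopy.

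For part (ii), I would start from the defining relation $\psi^{*}\alpha = \alpha - df$. Since $\alpha\wedge\alpha=0$, this gives $(\psi^{*}\alpha)\wedge\alpha = -df\wedge\alpha$, and hence
\[
(\psi^{*}\alpha)\wedge\alpha\wedge\omega^{m-1} = -df\wedge\alpha\wedge\omega^{m-1}.
\]
Using $d\alpha=\omega$ and $d\omega=0$, one computes $d(f\,\alpha\wedge\omega^{m-1}) = df\wedge\alpha\wedge\omega^{m-1} + f\,\omega^{m}$. Because $f$ is compactly supported, Stokes' theorem kills the left hand side after integration, and the formula of part (ii) follows directly after comparing with the definition \eqref{e2}.

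For part (i), the formula in (ii) already shows that, with $\alpha$ held fixed, $\int_{\M} f\,\omega^{m}$ is determined by $\psi$ and $\alpha$ alone. It remains to check independence of the primitive $\alpha$. Suppose $(\alpha_{1},f_{1})$ and $(\alpha_{2},f_{2})$ are two admissible pairs, and set $\gamma=\alpha_{2}-\alpha_{1}$ and $g=f_{2}-f_{1}$; then $d\gamma=0$ and $\gamma-\psi^{*}\gamma = dg$ with $g\in C_{0}^\infty(\M)$. Writing $\int g\,\omega^{m} = \int g\,d(\alpha_{1}\wedge\omega^{m-1})$ and integrating by parts yields
\[
\int_{\M} g\,\omega^{m} = -\int_{\M}(\gamma - \psi^{*}\gamma)\wedge\alpha_{1}\wedge\omega^{m-1}.
\]
Applying the change of variables $y=\psi(x)$ in the $\psi^{*}\gamma$ term (using $\psi^{*}\omega^{m-1}=\omega^{m-1}$) converts $\int (\psi^{*}\gamma)\wedge\alpha_{1}\wedge\omega^{m-1}$ into $\int \gamma\wedge(\psi^{-1})^{*}\alpha_{1}\wedge\omega^{m-1}$. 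The crucial observation is that $(\psi^{-1})^{*}\alpha_{1}-\alpha_{1} = d(f_{1}\circ\psi^{-1})$ with compactly supported primitive; combined with $d\gamma = d\omega^{m-1}=0$, a second application of Stokes shows the difference vanishes, so $\int g\,\omega^{m}=0$.

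For part (iii), I would construct the generating function $f_{s}$ of $\psi_{s}$ along the isotopy by differentiating the identity $\alpha-\psi_{s}^{*}\alpha = df_{s}$ in $s$. Using Cartan's formula and $i(X_{s})\omega=-dh_{s}$ one gets $d\dot f_{s} = d\psi_{s}^{*}(h_{s}-i(X_{s})\alpha)$, and compact support of $h_{s}$ and $X_{s}$ forces
\[
f_{1}(x) = \int_{0}^{1}\psi_{s}^{*}\bigl(h_{s}-i(X_{s})\alpha\bigr)(x)\,ds,
\]
which belongs to $C_{0}^\infty(\M)$; hence $\psi\in\Dom(\CAL,\M)$. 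Invariance of $\omega^{m}$ under $\psi_{s}$ gives
\[
(m+1)!\,\CAL(\psi) = \int_{0}^{1}ds\int_{\M}\bigl(h_{s}-i(X_{s})\alpha\bigr)\omega^{m}.
\]
The remaining task is the algebraic identity $\int_{\M}(i(X_{s})\alpha)\,\omega^{m} = -m\int_{\M}h_{s}\,\omega^{m}$, which I would derive from the vanishing of the $(2m+1)$-form $\alpha\wedge\omega^{m}$: contracting with $X_{s}$ gives $(i(X_{s})\alpha)\omega^{m} = -m\,\alpha\wedge\omega^{m-1}\wedge dh_{s}$, and one more integration by parts against compactly supported $h_{s}$ yields the claim. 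Substituting produces the asserted formula after collecting the factor $m+1$. The main obstacle I anticipate is the sign/combinatorial bookkeeping in this last step and in writing down $f_{s}$ so that compact support is transparent; the topological content itself is light once part (ii) has trivialised the dependence on $f$.
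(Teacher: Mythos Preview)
Your argument is correct and, for parts (i) and (ii), essentially coincides with the paper's proof: both reduce to Stokes' theorem applied to $d(f\alpha\wedge\omega^{m-1})$ and to a change-of-variables computation exploiting $d\gamma=0$. The only organisational differences are that you prove (ii) first and use it to dispose of the trivial half of (i), and that in the independence-of-$\alpha$ step you pull back the $\psi^{*}\gamma$ term by $\psi^{-1}$ (invoking $(\psi^{-1})^{*}\alpha_{1}-\alpha_{1}=d(f_{1}\circ\psi^{-1})$), whereas the paper instead rewrites $\int\beta\wedge\alpha_{1}\wedge\omega^{m-1}$ as $\int\psi^{*}(\beta\wedge\alpha_{1}\wedge\omega^{m-1})$ and collects the difference as $\int\psi^{*}\beta\wedge df_{1}\wedge\omega^{m-1}$; these are mirror images of the same manoeuvre.

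For part (iii) you go further than the paper, which simply declares the statement well known and cites \cite{McDuff}. Your construction of $f_{1}=\int_{0}^{1}\psi_{s}^{*}(h_{s}-i(X_{s})\alpha)\,ds$ via Cartan's formula, followed by the contraction of $\alpha\wedge\omega^{m}=0$ with $X_{s}$ and one more integration by parts to obtain $\int(i(X_{s})\alpha)\omega^{m}=-m\int h_{s}\omega^{m}$, is a clean and standard derivation; the sign and combinatorial bookkeeping you flag as a worry checks out and delivers exactly the factor $m+1$.
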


\begin{proof}
(i) This is a well known calculation, see e.g. \cite[Lemma 10.27]{McDuff}. 
Suppose that $\alpha_j-\psi^*\alpha_j=df_j$, $f_j\in C_0^\infty( \M)$, 
$d\alpha_j= \omega$, $j=1,2$. 
Denote $\beta=\alpha_2-\alpha_1$, $g=f_2-f_1$; then we have $d\beta=0$, 
$\beta-\psi^*\beta=dg$.  We need to prove that $\int_{ \M} g  \omega^m=0$. 

As in \cite[Lemma 10.27]{McDuff}, we have
\begin{multline*}
\int_{ \M}g \omega^m
=
\int_{\M}g \ d\alpha_1\wedge  \omega^{m-1}
=
\int_{ \M}(d(g\alpha_1)-(dg)\wedge \alpha_1)\wedge  \omega^{m-1}
=
-\int_{ \M}(dg)\wedge \alpha_1\wedge  \omega^{m-1}
\\
=
\int_{ \M}(\psi^*\beta-\beta)\wedge \alpha_1\wedge  \omega^{m-1}
=
\int_{ \M}(\psi^*\beta)\wedge \alpha_1\wedge  \omega^{m-1}
-
\int_{ \M}\psi^*(\beta\wedge \alpha_1\wedge  \omega^{m-1})
\\
=
\int_{ \M}\psi^*\beta\wedge (\alpha_1-\psi^*\alpha_1)\wedge  \omega^{m-1}
=
\int_{ \M}\psi^*\beta\wedge df_1 \wedge  \omega^{m-1}
=
-\int_{ \M}d(f_1\psi^*\beta\wedge  \omega^{m-1})=0,
\end{multline*}
as required. 

(ii) Similarly to the previous calculation, using Stokes formula, we have
\begin{multline*}
(m+1)! \CAL(\psi)
=
\int_\M f (d\alpha)\wedge \omega^{m-1}
=
\int_\M f d(\alpha \wedge \omega^{m-1})
=
-\int_\M df\wedge \alpha\wedge \omega^{m-1}
\\
=
\int_\M(\psi^*\alpha-\alpha)\wedge \alpha\wedge \omega^{m-1}
=
\int_\M \psi^*\alpha\wedge \alpha\wedge \omega^{m-1},
\end{multline*}
since $\alpha\wedge\alpha=0$.

(iii) Is well known; see \cite[Lemma~10.27]{McDuff}. 
\end{proof}

\begin{remark}
The Calabi invariant is usually defined as a map $\Ham^c( \M)\to \R$, in which case it is 
a homomorphism; see \cite{McDuff}. On the domain $\Dom(\CAL,  \M)$
this is in general not the case: it is not difficult to show that 
$\Dom(\CAL,  \M)$ is, in general, not a subgroup of $\Symp^c( \M)$. 
Example~\ref{exa.4}   below shows that in general neither of the two sets $\Symp^c_0( \M)$, $\Dom(\CAL,\M)$ 
is a subset of the other one.
\end{remark}

\begin{example}\label{exa.4}
Let $( \M,\omega)$ be $T^*\mathbb S^1$ with the canonical symplectic structure of the cotangent
bundle (see e.g. \cite[Section~3.1]{McDuff}). Let $(s,\theta)\in\R\times[0,2\pi]$ be the coordinates
in $T^*\mathbb S^1$. All possible one-forms $\alpha$ such that $d\alpha=\omega$ 
can be described as $\alpha=sd\theta+\gamma d\theta+dg$, where $\gamma\in\R$ and 
$g\in C^{\infty}(T^*\mathbb S^1)$. 
Consider the symplectic diffeomorphism  $\psi$ of 
$T^*\mathbb S^1$, defined by 
$$
\psi: (s,\theta)\mapsto (s,\theta+\phi(s)),
$$
where $\phi\in C^\infty(\R)$. Consider the following cases: 

(i) 
Suppose $\phi(s)=0$ for $s\leq-1$ and $\phi(s)=2\pi$ for $s\geq1$, and $\int_{\R}s\phi'(s)ds=0$.
Then $\psi$ is not homotopic to identity, but  
$\psi^*(sd\theta)-sd\theta=s\phi'(s)ds$ and so $\psi\in\Dom(\CAL,T^*\mathbb S^1)$. 
Thus, $\Dom(\CAL,T^*\mathbb S^1)$ is not a subset of $\Symp^c_0(T^*\mathbb S^1)$.

(ii)
Suppose $\phi\in C_0^\infty(\R)$, and  
$\int_{\R}s\phi'(s)ds\not =0$.
Then it is easy to see that $\psi\in\Symp^c_0( T^*\mathbb S^1)$ but  $\psi\not\in\Dom(\CAL,T^*\mathbb S^1)$. 
\end{example}

\subsection{$\log\det$ and $\CAL$}\label{sec.app3}
Here, without any attempt at being rigourous, we point out an analogy 
between the Calabi invariant of a symplectic map and the logarithm of the determinant
of a unitary operator.  This analogy helps to understand the relation between 
Theorems~\ref{maintheorem}, \ref{th2}  and their ``quantum" counterparts.

In order to make our discussion concrete, 
suppose $\M=\R^{2n}$ and let us use the Weyl quantisation procedure. 
That is, for a real valued function $h\in C_0^\infty(\R^{2n})$, let us define the self-adjoint operator
$$
(Hu)(q)
=
\int_{\R^{2n}} 
e^{i \langle q-q',p\rangle }h(\tfrac{q+q'}2,p)u(q')d^n q' d^n p.
$$
Then, clearly, 
\begin{equation}
\Tr H=\int_{\R^{2n}} h(q,p)\  d^nq\  d^np=\int_{\R^{2n}} h(x)\frac{\omega^n(x)}{n!}.
\label{f1}
\end{equation}
This connection between trace and phase space integral lies at the heart of the correspondence
between quantum and classical mechanics.

Further, 
let $U$ be the unitary operator obtained from $H$ by means of exponentiation: $U=\exp(-iH)$. 
$U$ can be regarded as a time one map corresponding to the differential equation $i\frac{d}{dt}U(t)=HU(t)$. 
The analogue of this procedure is taking the time one map of the Hamiltonian flow $\Phi_t$ generated
by $h$. Then we have
$$
-\Im \log\det U=\Tr H = \int_{\R^{2n}} h(x) \frac{\omega(x)^n}{n!}=\CAL(\Phi_1).
$$
In other words, we have a diagram
$$
\begin{CD}
h @>Quantisation>> H
\\
@V
\genfrac{}{}{0pt}{1}{classical}{dynamics}  VV @VV\genfrac{}{}{0pt}{1}{quantum}{dynamics}  V
\\
\Phi_1  @.   e^{iH}
\\
@VVV @VVV
\\
\CAL(\Phi_1) @= - \Im \log\det e^{iH}
\end{CD}
$$
This to some extent explains the analogy between $-\Im \log\det$ and $\CAL$.

\section*{Acknowledgements}
Research was partially supported by the London Mathematical Society.
A.P. is grateful to  H.~Dullin,
A.~Gorodetski, M.~Hitrik, and  A.~Strohmaier for useful discussions and references 
to the literature and to N.~Filonov for reading the manuscript and making a number
of very helpful remarks.


\begin{thebibliography}{3}

\bibitem{AbrM}
R.~Abraham, J.~E.~Marsden,
\emph{Foundations of mechanics.} 
Second edition. Benjamin/Cummings Publishing Co.,  Reading, Mass., 1978.





\bibitem{Bolle}
D.~Boll\'e, 
\emph{On classical time delay},
New Developments in Mathematical Physics, 
Edited by H.~Mitter and L.~Pittner,
Springer, 1981. 


\bibitem{BO}
D.~Boll\'e, T.~A.~Osborn,
\emph{Sum rules in classical scattering},
J. Math. Phys., \textbf{22}, no. 4, 883--892 (1981).






\bibitem{Nuss}
C.~A.~A.~de Carvalho, H.~M.~Nussenzveig, 
\emph{Time delay,}
Physics Reports \textbf{364} (2002), 83--174.


\bibitem{Uzy}
E.~Doron, U.~Smilansky, 
\emph{A scattering theory approach to semiclassical quantization,}
Lecture notes, summer school ``Mesoscopic systems and chaos: a novel approach", 
3--6 August 1993.


\bibitem{Hunziker}
W.~Hunziker,
\emph{The S-Matrix in Classical Mechanics},
Commun. Math. Phys. \textbf{8}, no. 4, 282--299 (1968).



\bibitem{Jung}
C.~Jung, \emph{Poincar\'e map for scattering states,}
J. Phys. A \textbf{19} (1986), 1345--1353.

\bibitem{LL}
L.~D.~Landau, E.~M.~Lifshitz,
\emph{Course of theoretical physics.}
Vol. 1. Mechanics. Third edition. Pergamon Press, Oxford-New York-Toronto, Ont., 1976.

\bibitem{LV}
C.~H.~Lewenkopf, R.~O.~Vallejos,
\emph{On the classical-quantum correspondence for the scattering dwell time},
Phys. Rev. E \textbf{70}, 036214 (2004). 


\bibitem{McDuff}
D.~McDuff, D.~Salamon,
\emph{Introduction to symplectic topology},
Oxford University Press, 1998

\bibitem{Narnhofer}
H.~Narnhofer,
\emph{Another definition for time delay},
Phys. Rev. D \textbf{22}, no. 10 (1980), 2387--2390. 

\bibitem{NT}
H.~Narnhofer, W.~Thirring, 
\emph{Canonical scattering transformation in classical mechanics,}
Phys. Rev. A \textbf{23}, no. 4 (1981), 1688--1697. 

\bibitem{RS1}
M.~Reed, B.~Simon, 
\emph{Methods of modern mathematical physics. Vol 1: Functional analysis.}
Academic Press, 1972. 

\bibitem{RS3}
M.~Reed, B.~Simon, 
\emph{Methods of modern mathematical physics. Vol 3: Scattering theory.}
Academic Press, 1979. 

\bibitem{Robert1}
Robert, Didier
\emph{Relative time-delay for perturbations of elliptic operators and semiclassical asymptotics.}
J. Funct. Anal. \textbf{126} (1994), no. 1, 36--82.

\bibitem{Robert2}
Robert, Didier
\emph{Semiclassical asymptotics for the spectral shift function.}
Differential operators and spectral theory, 187--203, Amer. Math. Soc. Transl. Ser. 2, \textbf{189},
Amer. Math. Soc., Providence, RI, 1999.

\bibitem{Uzy2}
C.~Rouvinez and U.~Smilansky,
\emph{A Scattering Approach to the quantization of Hamiltonians in 2 Dimensions 
--- Application to the Wedge Billiard} J. Phys. A: Math. Gen. \textbf{28} (1995), 77--104.

\bibitem{Martin1}
M.~Sassoli de Bianchi, Ph.~A.~Martin, 
\emph{On the definition of time delay in scattering theory.}
Helv. Phys. Acta \textbf{65} (1992), no. 8, 1119--1126. 


\bibitem{Thirring1}
W.~Thirring, 
\emph{Classical scattering theory}, 
New Developments in Mathematical Physics, 
Edited by H.~Mitter and L.~Pittner,
Springer, 1981. 

\bibitem{Thirring2}
W.~Thirring,
\emph{Classical mathematical physics. Dynamical systems and field theories.} 
Third edition. Springer-Verlag, New York, 1997.



\bibitem{Yafaev}
D.~R.~Yafaev,
\emph{Mathematical scattering theory. General theory.}
American Mathematical Society, Providence, RI, 1992. 


\end{thebibliography}
\end{document}